\numberwithin{equation}{section}
\theoremstyle{plain}
\newtheorem{Th}{Theorem}[section]
\newtheorem{Lemma}[Th]{Lemma}
\newtheorem{Prop}[Th]{Proposition}
\newtheorem{Cor}[Th]{Corollary}
\theoremstyle{definition}
\newtheorem{Def}[Th]{Definition}
\newtheorem{Note}[Th]{Note}
\newtheorem{Rem}[Th]{Remark}
  \newcommand{\ab}[1]{\left\vert{#1}\right\vert}
 \newcommand{\low}[1]{\left\lfloor{#1}\right\rfloor}
 \newcommand{\up}[1]{\left\lceil {#1}\right\rceil }
  \newcommand{\ce}{{\subseteq}}
  \newcommand{\aaa}{{\underline{a}}}
  \newcommand{\PP}{{\mathbb{P}}}
  \newcommand{\E}{{\mathbb{E}}}
  \newcommand{\I}{\mathbb I}
\begin{document}

\title{Generalized Sidon sets}

\author{Javier Cilleruelo}
\address{Departamento de Matem\'aticas. Universidad Aut\'onoma de Madrid, 28049 - Madrid, Spain.}
\thanks{During the preparation of this paper, J. C. and C. V. were supported  by Grant
MTM 2008-03880 of MYCIT} \email{franciscojavier.cilleruelo@uam.es}

\author{Imre Z. Ruzsa}
\address{Alfr\'ed R\'enyi Institute of Mathematics\\
     Budapest, Pf. 127\\
     H-1364 Hungary.}
\email{ruzsa@renyi.hu}

\thanks{I. R. is supported by Hungarian National Foundation for Scientific Research
(OTKA), Grants No. K 61908, K 72731.}

\author{Carlos Vinuesa}
\address{Departamento de Matem\'aticas. Universidad Aut\'onoma de Madrid, 28049 - Madrid, Spain.}
\thanks{C. V. would like to thank I. R. for his hospitality,
as well as that of the Alfr\'ed R\'enyi Institute of Mathematics,
during his stay in Budapest.} \email{c.vinuesa@uam.es}

 \subjclass[2000]{11B50, 11B75, 11B13, 11P70}
    \begin{abstract}
    We give asymptotic sharp estimates for the cardinality of a set of residue
classes with the property that the representation function is
bounded by a prescribed number. We then use this to obtain an
analogous result for sets of integers, answering an old question of
Simon Sidon.
    \end{abstract}

     \maketitle

     \section{Introduction}

     A \emph{Sidon set} $A$ in a commutative group is a set with the property
that the sums $a_1+a_2$, $a_i\in A$ are all distinct except when
they coincide because of commutativity. We consider the case when,
instead of that, a bound is imposed on the number of such
representations. When this bound is $g$, these sets are often called
$B_2[g]$ sets. This being both clumsy and ambiguous, we will avoid
it, and fix our notation and terminology below.

     Our main interest is in sets of integers and residue classes, but we
formulate our concepts and some results in a more general setting.

     Let $G$ be a commutative group.

     \begin{Def} \label{defrep}
For $A \subset  G$, we define the corresponding \emph{representation function} as
     $$r(x) = \sharp \{(a_1,a_2): a_i \in A, \, a_1 + a_2 = x\}.$$
The \emph{restricted representation function} is
$$r'(x) = \sharp \{(a_1,a_2): a_i \in A,\, a_1 + a_2 = x,\, a_1\ne a_2\}.$$
Finally, the \emph{unordered representation function} $r^*(x)$ counts the pairs
$(a_1, a_2)$ where
$(a_1, a_2)$  and $(a_2, a_1)$ are identified. With an ordering given on $G$
(not necessarily in any connection with the group operation) we can write this
as
$$r^*(x) = \sharp \{(a_1,a_2): a_i \in A,\, a_1 + a_2 = x, \,a_1\leq a_2\}.$$
     \end{Def}

     These functions are not independent; we have always the equality
     \[   r^*(x) = r(x) - \frac{r'(x)}{2}   \]
     and the inequalities
     \[   r'(x) \leq  r(x) \leq  2r^*(x) . \]
     We have $r(x)=r'(x)$ except for $x=2a$ with $a\in A$. If we are in this last case and there are no
elements of order 2 in $G$, then necessarily $r(x)=r'(x)+1$, and
the quantities are more closely connected:
     \[   r'(x) = 2 \low{\frac{r(x)}{2}}, \ r^*(x) = \up{\frac{r(x)}{2}} .  \]
     This is the case in ${\mathbb {Z}}$, or in ${\mathbb {Z}}_q$ for odd values of $q$. For even $q$
this is not necessarily true, but both for constructions and estimates the
difference seems to be negligible, as we shall see. In a group with lots of
elements of order 2, like in ${\mathbb {Z}}_2^m$, the difference is substantial.

     Observe that $r$ and $r'$ make sense in a noncommutative group as well,
while $r^*$ does not.

     \begin{Def} 
     We say that $A$ is a \emph{$g$-Sidon set}, if $r(x)\leq g$ for all $x$.
It is a \emph{weak $g$-Sidon set}, if $r'(x)\leq g$ for all $x$.
It is an \emph{unordered $g$-Sidon set}, if $r^*(x)\leq g$ for all $x$.
     \end{Def}

\begin{Note}
When we have a set of integers $C \  \ce \ [1, m]$, we say that it is a $g$-Sidon set $\pmod m$ if the residue classes $\{c \pmod{m} : c \in C\}$ form a $g$-Sidon set in $\mathbb{Z}_m$.
\end{Note}

     The strongest possible of these concepts is that of an unordered 1-Sidon
set, and this is what is generally simply called a Sidon set. A weak 2-Sidon
set is sometimes called a weak Sidon set.

     These concepts are closely connected. If there are no elements of order 2,
then  $2k$-Sidon sets and unordered $k$-Sidon sets coincide, in
particular, a Sidon set is the same as a 2-Sidon set. Also, in this
case $(2k+1)$-Sidon sets and weak $2k$-Sidon sets coincide.
Specially, a 3-Sidon set and a weak 2-Sidon set are the same.

Our aim is to find estimates for the maximal size of a $g$-Sidon set
in a finite group, or in an interval of integers.

\subsection{The origin of the problem: g-Sidon sets in the integers}
In 1932, the analyst S. Sidon asked to a young P. Erd\H os about the
maximal cardinality of a $g$-Sidon set of integers in $\{1,\dots , n\}$.
Sidon was interested in this problem in connection
with the study of the $L_p$ norm of Fourier series with frequencies
in these sets but Erd\H os was captivated by the combinatorial and
arithmetical flavour of this problem and it was one of his favorite
problems; not in vain it has been one of the main topics in
Combinatorial Number Theory.


  \begin{Def} 
     For a positive integer $n$
     \[   \beta _g(n) = \max \ab A : A \subset \{1, \dots , n\}, \, A \text{ is a }
g\text{-Sidon set} . \]
     We define $\beta '_g(n)$ and $\beta ^*_g(n)$ analogously.
     \end{Def}

 The behaviour of this quantity is  only known for classical
     Sidon sets and for weak Sidon
sets : we have $\beta _2(n)\sim \sqrt n$ and $\beta _3(n)\sim \sqrt
n$.

The reason which makes easier the case $g=2$ is that
     2-Sidon sets have the property that the differences $a-a'$
     are all distinct. Erd\H os an Tur\'an \cite{ET} used this to prove that
     $\beta_2(n)\le \sqrt n+O(n^{1/4})$ and Lindstr\"om \cite{L} refined that
     to get $\beta_2(n)\le \sqrt n+n^{1/4}+1$. For weak Sidon sets  Ruzsa \cite{Ru} proved that
     $\beta_3(n)\le \sqrt n+4n^{1/4}+11$.

     For the lower bounds, the classical constructions of Sidon sets of Singer \cite{Si}, Bose \cite{BC} and Ruzsa \cite{Ru} in some finite groups, $\mathbb Z_m$,
          give $\beta_3(n)\ge \beta_2(n)\ge \sqrt n(1+o(1))$. Then,
      $\lim_{n\to \infty}\frac{\beta_2(n)}{\sqrt n}=\lim_{n\to \infty}\frac{\beta_3(n)}{\sqrt
      n}=1$.

However for $g\ge 4$ it has not even been proved that $\lim_{n\to
\infty}\beta_g(n)/\sqrt n$ exists.

For this reason we write
     \[  \overline \beta _g = \limsup_{n \to \infty} \beta _g(n)/\sqrt n \qquad \text{and} \qquad \underline \beta _g = \liminf_{n \to \infty} \beta _g(n)/\sqrt n . \]
     It is very likely that these limits coincide, but this has only been proved for
     $g=2,3$.
     A wide literature has been written with bounds for
     $\overline{\beta}_g$ and $\underline{\beta}_g$ for arbitrary $g$. The trivial
     counting argument gives $\overline{\beta}_g\le \sqrt{2g}$
     while the strategy of pasting Sidon sets in $\mathbb Z_m$ in the obvious way gives
     $\underline{\beta}_g\ge \sqrt{g/2}$.

     The problem of narrowing this gap has attracted the attention of many mathematicians
     in the last years.

     For example, while for $g=4$ the trivial upper bound gives
     $\overline{\beta}_4\le \sqrt 8$, it was proved in \cite{Ci1} that
     $\overline{\beta}_4\le \sqrt 6$, which was refined to
     $\overline{\beta}_4\le 2.3635...$ in \cite{P} and to
     $\overline{\beta}_4\le 2.3218...$ in \cite{HP}.

     On the other hand, Kolountzakis \cite{K} proved that $\underline{\beta}_4\ge \sqrt 2$, which  was improved to $\underline{\beta}_4\ge 3/2$ in \cite{CRT} and to $\underline{\beta}_4\ge  4/\sqrt
     7=1.5118...$ in \cite{HP}.

     We describe below the progress done for large $g$:

     \smallskip

\begin{center}\begin{tabular}{ll}
$\frac{\overline{\beta}_g}{\sqrt g}$ & $\le \sqrt 2 = 1.4142...$ (trivial) \\
& $\le 1.3180... $ (J. Cilleruelo - I. Z. Ruzsa - C. Trujillo,\ \cite{CRT})\\
& $\le 1.3039... $ (B. Green,\ \cite{G}) \\
& $\le 1.3003... $ (G. Martin - K. O'Bryant,\ \cite{Mar1})\\
& $\le 1.2649... $ (G. Yu,\ \cite{Yu})\\
& $\le 1.2588... $ (G. Martin - K. O'Bryant,\ \cite{Mar2})
\end{tabular}\end{center}

\smallskip

\begin{center}\begin{tabular}{ll}
$\lim_{g\to \infty}\frac{\underline{\beta}_g}{\sqrt g}$ & $\ge 1/\sqrt 2 = 0.7071...$ (M. Kolountzakis,\ \cite{K}) \\
& $\ge 0.75 $ (J. Cilleruelo - I. Z. Ruzsa - C. Trujillo,\ \cite{CRT})\\
& $\ge 0.7933... $ (G. Martin - K. O'Bryant,\ \cite{Mar})\\
& $\ge \sqrt{2/\pi}=0.7978... $ (J. Cilleruelo - C. Vinuesa,\ \cite{CV}).
\end{tabular}\end{center}

\bigskip

 Our main result connects this problem with a quantity arising from the analogous
continuous problem, first studied by Schinzel and Schmidt
\cite{Schin}. Consider all nonnegative real functions $f$ satisfying
$f(x)=0$ for all $x\notin [0,1]$, and
     \[   \int _0^1 f(t) f(x-t) \, dt \leq  1  \]
     for all $x$.  Define the constant $\sigma $ by
     \begin{equation} \label{sigma}
\sigma  = \sup \int _0^1 f(x) \, dx
     \end{equation}
     where the supremum is taken over all functions $f$ satisfying the above
restrictions.

     \begin{Th}  \label{integer}
     \[ \lim_{g\to \infty}  \frac{\underline \beta _g}{\sqrt g} =
     \lim_{g\to \infty}  \frac{\overline \beta _g}{\sqrt g} = \sigma  . \]
     \end{Th}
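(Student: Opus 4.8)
Because $\underline\beta_g\le\overline\beta_g$ for every $g$, it suffices to establish the two one–sided statements
\[
\limsup_{g\to\infty}\frac{\overline\beta_g}{\sqrt g}\le\sigma\qquad\text{and}\qquad\liminf_{g\to\infty}\frac{\underline\beta_g}{\sqrt g}\ge\sigma ;
\]
indeed these give $\sigma\le\liminf_g\underline\beta_g/\sqrt g\le\liminf_g\overline\beta_g/\sqrt g\le\limsup_g\overline\beta_g/\sqrt g\le\sigma$, so both limits in Theorem~\ref{integer} exist and equal $\sigma$.

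\textbf{The upper bound.} The plan is a smoothing argument that turns a $g$-Sidon set of integers into an almost admissible function for~\eqref{sigma}. Let $A\subseteq\{1,\dots,n\}$ be a $g$-Sidon set, fix a positive integer $h$ (for instance $h=\lfloor\sqrt n\rfloor$), let $\kappa$ equal $1/h$ on $[0,h)$ and $0$ elsewhere, and set $\psi(t)=\sum_{a\in A}\kappa(t-a)$, a nonnegative function supported on $[1,n+h)$ with $\int\psi=\ab{A}$. Then $\psi*\psi=\sum_{x}r(x)\,(\kappa*\kappa)(\,\cdot-x)$, and because $h$ is an integer one has the \emph{exact} identity $\sum_{x\in\mathbb Z}(\kappa*\kappa)(t-x)=1$ for all $t$; combined with $0\le r(x)\le g$ this gives $\psi*\psi\le g$ everywhere. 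Rescaling the argument of $\psi$ to the interval $[0,1]$ and multiplying by the constant $\sqrt{(n+h)/g}$ produces a function that is admissible in~\eqref{sigma} and has integral $\ab{A}/\sqrt{(n+h)g}$; hence $\ab{A}\le\sigma\sqrt{(n+h)g}=\sigma\sqrt{gn}\,(1+o(1))$, so $\beta_g(n)\le\sigma\sqrt{gn}\,(1+o(1))$, giving $\overline\beta_g\le\sigma\sqrt g$ and therefore $\limsup_g\overline\beta_g/\sqrt g\le\sigma$. Note that merely passing to $\mathbb Z_{2n}$ and invoking a cyclic estimate would cost a spurious factor $\sqrt2$, since cyclic groups genuinely carry larger $g$-Sidon sets; what makes the smoothing argument sharp is that the sums $a_1+a_2$ stay confined to $[2,2n]$, which is exactly the restriction ``$f$ supported on $[0,1]$'' in~\eqref{sigma}.

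\textbf{The lower bound.} Here one must go in the opposite direction: starting from a function $f$ with $\int f\ge\sigma-\e$ obeying the constraints of~\eqref{sigma}, produce, for all large $n$, a $g$-Sidon set in $\{1,\dots,n\}$ of size $\ge(\sigma-\e)\sqrt{gn}\,(1-o(1))$, where $g$ may grow with $1/\e$. The obvious attempt — a random set whose local density is proportional to $f$ — has the right expected cardinality and expected representation function $(1-o(1))g$, but it is useless for a \emph{fixed} $g$: as $n\to\infty$ the representation function does not concentrate (its fluctuations are of order $\sqrt g$, not $o(g)$), a union bound over all values of $x$ fails, and a deletion step would have to discard far more elements than the set contains. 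One is thus forced into a structured construction — this is the content of the residue–class analysis carried out earlier in the paper, transferred to an interval: discretize $f$ on the scaffold of a perfect difference (Sidon) set, attaching to each scaffold element a block whose length follows $f$, so that the Sidon property forces the various ``sum–blocks'' to overlap only in a controlled, convolution–like pattern. This makes $r(x)\le g$ hold \emph{deterministically} at every point while the cardinality reproduces $\int f$ up to $o(1)$, whence $\underline\beta_g\ge(\sigma-\e-o(1))\sqrt g$; letting $\e\to0$ (hence $g\to\infty$) gives $\liminf_g\underline\beta_g/\sqrt g\ge\sigma$.

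\textbf{The main obstacle} is exactly this last construction: checking the pointwise bound $r(x)\le g$ for the structured set, and verifying that its density profile approximates $f$ closely enough to recover $\int f$. The overlap analysis of blocks attached to Sidon–spaced points, the choice of the length profile, and the control of boundary effects are the delicate points; once they are settled, the transfer to the integers and the two limit computations above are routine, and the proof of Theorem~\ref{integer} is complete.
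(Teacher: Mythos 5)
Your Part~A is correct and is essentially the paper's argument in continuous clothing: smoothing the indicator of $A$ by a box kernel is just the Schinzel--Schmidt step-function device behind Theorem~\ref{PolSchin} (indeed with $h=1$ your $\psi$ is exactly that step function and you get $\ab A\le\sigma\sqrt{gn}$ with no error term), so this half is fine.

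Part~B, however, is not a proof, and the plan you sketch in its place would not work. You yourself label the key construction ``the main obstacle'' and leave it unsettled; worse, attaching to each element of a Sidon scaffold a block of consecutive integers whose length follows $f$ cannot reach the constant $\sigma$. If the scaffold spacing exceeds the block lengths, the sum-blocks coming from distinct unordered pairs of scaffold elements are pairwise disjoint, so the bound $r(x)\le g$ must be satisfied by each pair separately; the autocorrelation of two intervals of lengths $\ell_i,\ell_j$ peaks at about $2\min(\ell_i,\ell_j)$, which forces essentially every block length to be at most $g/2$ and caps the set at roughly $\sqrt{gN/2}$ elements --- exactly the classical $\underline\beta_g\gtrsim\sqrt{g/2}$ construction, far below $\sigma\sqrt g$. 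The constant $\sigma$ only appears when many pairs contribute to the same $x$, so that $r(x)$ imitates a Riemann sum of $f*f$, and in that regime a Sidon scaffold no longer controls $r$ deterministically. Your dismissal of the random construction is also misdirected: the paper does use a random set with densities proportional to a discretization of $f$ (Theorem~\ref{SchSch}), but with the representation bound $g_1\sim n^{1/3}$ growing with $n$, so Chernoff's inequality (Lemmas~\ref{number} and~\ref{repre}) does give concentration and produces one $\sigma$-dense $g_1$-Sidon set of fixed size. The fixed-$g$, $N\to\infty$ difficulty that you correctly identify is then resolved not inside the interval but by the paper's genuinely new ingredient: the dense $g_2$-Sidon sets modulo $q=p^2s$ of Sections~3--4 (Theorem~\ref{ciclicp2s}), whose size is $(1-\e)\sqrt{g_2q}$ for arbitrarily large $q$, pasted with the random integer set via Lemma~\ref{pasting} so that the representation bounds and the densities multiply ($g_1g_2\le g$, density $\approx\sigma\cdot1$). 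None of these ingredients --- the growing-$g_1$ probabilistic set, the modular construction, the pasting lemma --- appears in your proposal, so the lower bound, which is the substance of Theorem~\ref{integer}, remains unproved.
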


In other words, the theorem above says that the maximal cardinality of a $g$-Sidon set in $\{1, \dots, n\}$ is
$$\beta_g(n) = \sigma \sqrt{gn}(1-\varepsilon(g,n))$$
where $\varepsilon(g,n) \to 0$ when both $g$ and $n$ go to infinity.

Schinzel and Schmidt \cite{Schin} and Martin and O'Bryant
\cite{Mar2} conjectured that $\sigma =2/\sqrt \pi = 1.1283...$, and an extremal
function was given by $f(x)=1/\sqrt {\pi x}$ for $0<x\leq 1$. But
recently this has been disproved \cite{MV} with an explicit $f$
which gives a greater value. The current state of the art for this
constant is
$$1.1509... \le \sigma\le 1.2525...$$
both bounds coming from \cite{MV}.

The main difficulty in Theorem \ref{integer} is establishing the lower
bound for $\lim \frac{\underline \beta _g}{\sqrt g}$. Indeed the
upper bound $\lim \frac{\overline \beta _g}{\sqrt g} \le \sigma$
was already proved in \cite{CV} using a result of Schinzel and Schmidt from \cite{Schin}.
We include however a complete proof of the theorem.

The usual strategy to construct large $g$-Sidon sets in the integers
is pasting large Sidon sets modulo $m$ in a suitable form. The strategy
of pasting $g$-Sidon sets modulo $m$ had not been tried before since there
were no large enough known $g$-Sidon sets modulo $m$.

Precisely, the heart of the proof of this theorem is the construction of large $g$-Sidon sets modulo $m$.

\subsection{g-Sidon sets in finite groups}

     \begin{Def} 
     For a finite commutative group $G$ write
     \[   \alpha _g(G) = \max \ab A : A \subset G, \, A \text{ is a } g\text{-Sidon set} . \]
     We define $\alpha '_g(G)$ and $\alpha ^*_g(G)$ analogously.
     For the cyclic group $G={\mathbb {Z}}_q$, with an abuse of notation, we write
$\alpha _g(q) = \alpha _g({\mathbb {Z}}_q)$.
     \end{Def}

     An obvious estimate of this quantity is
     \[   \alpha _g(q) \leq  \sqrt {gq} .  \]
     Our aim is to show that for large $g$ for some values of $q$ this is
asymptotically the correct value. More exactly, write
     \[   \alpha _g = \limsup_{q \to \infty} \alpha _g(q)/\sqrt q .  \]
The case $g=2$ (Sidon sets) is well known, we have $\alpha _2 = 1$.
It is also known \cite{Ru} that $\alpha_3 =1$. Very little is known
about $\alpha_g$ for $g \ge 4$.

For $g = 2k^2$, Martin and O'Bryant \cite{Mar} generalized the well
known constructions of Singer \cite{Si}, Bose \cite{BC} and Ruzsa
\cite{Ru}, obtaining $\alpha_g \ge \sqrt{g/2}$ for these values of
$g$.

We are unable to exactly determine $\alpha _g$ for any  $g\geq 4$, but we will find its
asymptotic behaviour. Our main result sounds as follows.

     \begin{Th}  \label{modular}
     We have
     \[   \alpha _g = \sqrt g + O\left( g^{3/10} \right),  \]
     in particular,
     \[ \lim_{g\to \infty}  \frac{\alpha _g}{\sqrt g} = 1.  \]
     \end{Th}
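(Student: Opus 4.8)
The plan is to prove $\alpha_g \ge \sqrt g - O(g^{3/10})$, since the matching upper bound $\alpha_g \le \sqrt{gq}/\sqrt q = \sqrt g$ is already recorded above. Thus it suffices, for every sufficiently large $g$, to exhibit a prime (or prime power) $q$ with $q \to \infty$ and a $g$-Sidon set $A \subset \mathbb{Z}_q$ with $|A| \ge \sqrt{gq}\,(1 - O(g^{-1/5}))$. The natural starting point is a perfect difference set / Singer–Bose–Ruzsa Sidon set $B \subset \mathbb{Z}_q$ of size $\approx \sqrt q$, for which every nonzero difference is represented exactly once. The idea is then to ``thicken'' $B$: take a dilate-and-translate family, or more precisely consider a set of the form $A = \bigcup_{j} (c_j + \lambda_j B)$ or a Bose-type construction in $\mathbb{Z}_q$ with $q \approx m^2$ where one allows each ``column'' to contain several points rather than one. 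Because $B$ is Sidon, the representation function of such a union is controlled by how many of the pairwise combinations $(c_j + \lambda_j b_1) + (c_k + \lambda_k b_2)$ can collide, and a careful choice of the parameters $\lambda_j, c_j$ keeps every value hit at most $g$ times.

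The key steps, in order, are: (1) Fix the ambient modulus $q$ to be a prime of size roughly $k^2$ for a parameter $k$ to be chosen, with $g \approx $ (number of layers) and the layer count of order $g^{1/2}$ times a small correction — the precise bookkeeping will force the exponent $3/10$. (2) Build the base Sidon set $B \subset \mathbb{Z}_q$ via the Bose construction inside $\mathbb{F}_{q}^{\times}$ or via Singer, so $|B| \sim \sqrt q$ and $r_B(x) \le 2$ for all $x$. (3) Superpose $t$ affine copies of $B$ and estimate $r_A(x)$: writing $A = \{ u \cdot b : b \in B,\ u \in U\}$ (multiplicative thickening by a set $U$ of scalars) or the additive analogue, one gets $r_A(x) \le \sum$ over pairs in $U$ of the number of $B$-representations of the rescaled value, which is $O(|U|^2)$ in the worst case but can be made $\le g$ if $U$ itself is chosen to be a Sidon-type (or $B_2[h]$) set of scalars with $|U|^2 \approx g$. (4) Compute $|A| = |U| \cdot |B| \approx \sqrt g \cdot \sqrt q$, so $\alpha_g(q)/\sqrt q \approx |U| \approx \sqrt g$, and track the error terms from $|B| = \sqrt q + O(q^{1/4})$ and from the discrepancy between $|U|^2$ and $g$; optimizing over $k$ yields the $O(g^{3/10})$ loss. (5) Finally, let $q \to \infty$ through admissible primes with $g$ fixed to conclude $\alpha_g \ge \sqrt g - O(g^{3/10})$.

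I expect step (3) — bounding the representation function of the superposition and simultaneously not losing more than $O(g^{3/10})$ in the cardinality — to be the main obstacle. The naive superposition of $t$ affine copies of a Sidon set has representation function up to $\approx t^2$, which would only give $\alpha_g \gtrsim \sqrt{g}/\sqrt 2$ (the ``pasting'' bound referred to in the introduction, $\underline\beta_g \ge \sqrt{g/2}$, is exactly this phenomenon). Beating the constant $1/\sqrt2$ all the way up to $1$ requires the layers to interact almost like a genuine Sidon structure: the scalars/translations defining the layers must themselves form a near-perfect $B_2$ configuration, and one must show that collisions between different layers are governed by the Sidon property of $B$ rather than occurring freely. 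Making this rigorous — i.e., choosing $U$ and the embedding so that $r_A(x) \le g$ holds for \emph{every} $x$ with only a lower-order deficit in $|A|$ — is the crux, and the fact that perfect difference sets exist only for $q$ near prime powers, together with the $O(q^{1/4})$ error in $|B|$, is what prevents the exponent from being better than $3/10$.
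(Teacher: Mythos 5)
Your reduction to a lower bound is fine (the upper bound $\alpha_g\le\sqrt g$ is indeed trivial), but the core of the proof is missing. The mechanism you propose in step (3) --- thicken a Singer/Bose/Ruzsa Sidon set $B\subset\mathbb{Z}_q$ by affine copies whose scalars or translates $U$ themselves form a $B_2$-type configuration with $|U|^2\approx g$ --- does not by itself give $r_A(x)\le g$. If $A=\bigcup_{u\in U}(c_u+B)$, then $r_A(x)=\sum_{u,v}r_B(x-c_u-c_v)$; making $U$ Sidon only guarantees that the shifts $c_u+c_v$ are distinct, while each term can still equal $2$, so the worst case remains $2|U|^2$, which is exactly the pasting bound $\alpha_g\gtrsim\sqrt{g/2}$ of Martin and O'Bryant \cite{Mar} quoted in the introduction. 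To reach the constant $1$ you would need, for \emph{every} $x$, that about half of the $|U|^2$ shifted values satisfy $r_B=0$ --- a uniform equidistribution statement that your proposal names as ``the crux'' but supplies no tool for. (The multiplicative variant $A=U\cdot B$ is worse: the Sidon property of $B$ controls $B+B$, not $u_1B+u_2B$ for $u_1\ne u_2$.)

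The paper's proof supplies precisely this missing mechanism, and it does not proceed via copies of a single Sidon set in a cyclic group. One works first in $\mathbb{Z}_p^2$ with the family of parabolas $A_u=\{(x,x^2/u)\}$: the representation function of $A_u+A_v$ at $(a,b)$ equals $1+\left(\frac{\Delta}{p}\right)$ with $\Delta=4uv((u+v)b-a^2)$, and Lemma \ref{u+v} gives the exact complementarity $r_{u,v}(x)+r_{u',v'}(x)=2$ whenever $u+v\equiv u'+v'$ and $uvu'v'$ is a quadratic nonresidue. Taking $A=\bigcup_{u=t+1}^{t+k}A_u$ and pairing off, along each antidiagonal $u+v=\mathrm{const}$, parabola pairs of opposite character, the excess of $r(x)$ over $k^2$ is controlled by a character sum $S_t$; averaging over $t$ and applying Weil's bound produces a shift $t$ with $S_t<2k^{3/2}$ uniformly in $x$, which is Theorem \ref{pxp} ($|A|=kp-k+1$, $g=\lfloor k^2+2k^{3/2}\rfloor$). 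A further projection $\mathbb{Z}_p^2\to\mathbb{Z}_{p^2s}$ converts this into a $g(s+1)$-Sidon set of size $ms$ (Theorem \ref{ciclicp2s}), and the exponent $3/10$ arises from balancing the two relative losses $O(k^{-1/2})$ and $O(1/s)$ by choosing $k=4s^2$, $s=\Theta(g^{1/5})$, giving $\alpha_g\ge\sqrt g\,(1-O(g^{-1/5}))=\sqrt g-O(g^{3/10})$ --- not, as you suggest, from the $O(q^{1/4})$ error in $|B|$ or the sparsity of prime powers. In short, your outline stops exactly where the real work begins, and the route you sketch is the one already known to stall at $\sqrt{g/2}$.
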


\bigskip

     In Section 2, as a warm-up, we give a slight improvement of the obvious upper estimate.

In Section 3 we construct dense $g$-Sidon sets in groups ${\mathbb
{Z}}_p^2$. In Section 4 we use this to construct $g$-Sidon sets
modulo $q$ for certain values of $q$.

Section 5 is devoted to the proof of the upper bound of Theorem
\ref{integer}. In Section 6 we prove the lower bound of Theorem \ref{integer}
pasting copies of the large g-Sidon sets in $\mathbb Z_q$ which we
constructed in Section 4. In these two sections, we connect the discrete
and the continuous world, combining some ideas from Schinzel and Schmidt
and some probabilistic arguments used in \cite{CV}.

     \section{An upper estimate}

     The representation function $r(x)$ behaves differently
at elements of $2\cdot A=\{2a:a\in A\}$ and the rest; in particular, it can be odd only
on this set. Hence we formulate our result in a flexible
form that takes this into account.

     \begin{Th} 
     Let $G$ be a finite commutative group with $\ab G =q$. Let $k\geq 2$ and $l\geq 0$ be
integers and $A\subset G$ a set such that the corresponding representation function
satisfies
     \[    r(x) \leq  \left\{ \begin{array}{lcc}
k, & \text{ if } x \notin  2\cdot A, \\
k+l, & \text{ if }  x \in  2\cdot A . \\
\end{array} \right.        \]
     We have
     \begin{equation} \label{Aup}
     \ab A < \sqrt {(k-1)q} + 1 + \frac{l}{2} + \frac{l(l+1)}{2(k-1)} . \end{equation}
     \end{Th}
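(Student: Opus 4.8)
The plan is to exploit counting of ordered pairs, together with a Fourier/second-moment argument applied not to $A$ itself but to a suitable perturbation that absorbs the irregular behaviour on $2\cdot A$. Write $N = |A|$. The basic identity is $\sum_x r(x) = N^2$, so if we could use $r(x)\le k$ everywhere we would immediately get $N^2 \le k q$; the point is to do better by using that $r(x)$ is ``on average'' much smaller, since $r(x)=1$ is forbidden for most $x$ (if $x\notin 2\cdot A$ then $r(x)$ is even, hence $0$ or $\ge 2$). More precisely, I would estimate $\sum_x r(x)^2$ or, equivalently, the number of additive quadruples, from below by convexity and from above by $\sum_x r(x)^2 \le k\sum_x r(x) = kN^2$ after peeling off the $x\in 2\cdot A$ terms where $r$ can reach $k+l$. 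The extra contribution there is at most $N\bigl((k+l)^2-k^2\bigr)$-ish, which after optimisation produces the $l/2$ and $l(l+1)/(2(k-1))$ correction terms.

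Concretely, the first step is to set up the two counts. Let $B = 2\cdot A$, so $|B|\le N$ (with equality when $G$ has no $2$-torsion). For $x\notin B$ we have $r(x)\in\{0,2,4,\dots\}$, so $r(x)\ge 2$ whenever $r(x)>0$; hence on the support of $r$ outside $B$ we get $r(x)(r(x)-1)\ge r(x)$. Summing $r(x)(r(x)-1)$ over all $x$ counts ordered pairs $(a_1,a_2),(a_3,a_4)$ with $a_1+a_2=a_3+a_4$ and $(a_1,a_2)\ne(a_3,a_4)$, i.e. $a_1-a_3=a_4-a_2$ with the pair distinct; for each nonzero difference $d$ there are exactly $N$ ways to write $d=a_1-a_3$ summed over $d$... more cleanly, $\sum_x r(x)(r(x)-1) = \#\{(a_1,a_2,a_3,a_4): a_1+a_2=a_3+a_4\} - N^2 = \sum_{d} r_-(d)^2 - N^2$ where $r_-(d) = \#\{(a,a'): a-a'=d\}$, and $r_-(0)=N$, $\sum_d r_-(d) = N^2$. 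So $\sum_x r(x)(r(x)-1) \ge \sum_{d\ne 0} r_-(d)^2 \ge (N^2-N)^2/(q-1)$ by Cauchy--Schwarz, since there are $q-1$ nonzero differences $d$ and they carry total mass $N^2-N$.

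For the upper bound on the same quantity: split according to whether $x\in B$. For $x\notin B$, $r(x)(r(x)-1)\le (k-1)r(x)$. For $x\in B$, $r(x)(r(x)-1)\le (k+l-1)r(x)$; write this as $(k-1)r(x) + l\,r(x) \le (k-1)r(x) + l(k+l)$. Summing, $\sum_x r(x)(r(x)-1) \le (k-1)\sum_x r(x) + |B|\, l(k+l) \le (k-1)N^2 + Nl(k+l)$. Combining with the lower bound gives $(N^2-N)^2/(q-1) \le (k-1)N^2 + Nl(k+l)$, and then one divides by $N^2$, writes $N = \sqrt{(k-1)(q-1)}\,(1+\text{small})$, and solves the resulting quadratic-type inequality for $N$; the stated bound $\ab A < \sqrt{(k-1)q} + 1 + \frac l2 + \frac{l(l+1)}{2(k-1)}$ should drop out after routine (if slightly fiddly) manipulation, using $q-1<q$ and crude bounds on the lower-order terms.

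The main obstacle I anticipate is the final arithmetic: going from the clean inequality $(N^2-N)^2 \le (q-1)\bigl((k-1)N^2 + Nl(k+l)\bigr)$ to the precise closed form with exactly the constants $1 + l/2 + l(l+1)/(2(k-1))$ requires choosing the right intermediate estimates — in particular one wants $(N^2-N)^2/N^2 = (N-1)^2$ on the left and then to take square roots carefully, keeping track of whether to bound $k+l$ by $k-1$ plus corrections — and it is easy to lose a factor of $2$ or an additive constant. A secondary point to be careful about is the case $l=0$ or small $N$, where one must check the inequality is not vacuous and that the strict inequality ``$<$'' (rather than ``$\le$'') is genuinely obtained, which typically comes from the fact that Cauchy--Schwarz is not tight unless $r_-(d)$ is constant on nonzero differences, a situation that forces $A$ to be essentially a perfect difference set and can be excluded or handled separately.
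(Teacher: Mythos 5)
Your argument is essentially the paper's own proof: the paper likewise bounds the second moment $R=\sum_x r(x)^2$ from above by $km^2+l(k+l)m$ (using $r(x)\bigl(r(x)-k\bigr)\le 0$ off $2\cdot A$ and $\le l(k+l)$ on it, exactly your split) and from below via the difference function $d$, separating $d(0)=m$ and applying Cauchy--Schwarz over the $q-1$ nonzero differences, which is precisely your quadruple-count inequality $(N^2-N)^2/(q-1)\le (k-1)N^2+l(k+l)N$. The one step you left as ``routine but fiddly'' is indeed routine and closes exactly as in the paper: use $q-1<q$ and divide by $N^2$ to get $(N-1)^2<(k-1)q+\frac{l(k+l)q}{N}$; if $N<\sqrt{(k-1)q}$ you are done, and otherwise substitute $N\ge\sqrt{(k-1)q}$ into the last term and take square roots with $\sqrt{x+y}\le\sqrt{x}+\frac{y}{2\sqrt{x}}$, giving $N-1<\sqrt{(k-1)q}+\frac{l(k+l)}{2(k-1)}$, which is the stated bound since $l(k+l)=l(k-1)+l(l+1)$. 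Two of your anticipated worries are unnecessary: strictness does not come from analyzing equality in Cauchy--Schwarz (no difference-set discussion is needed) but simply from replacing $q-1$ by $q$ in the denominator when $N>1$, the case $N\le 1$ being trivial; and the parity observation that $r(x)$ is even off $2\cdot A$ plays no role in the proof.
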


     \begin{Cor} 
     Let $G$ be a finite commutative group with $\ab G =q$, and let
$A\subset G$ be a $g$-Sidon set. If $g$ is even, then
     \[   \ab A \leq \sqrt {(g-1)q} + 1  .  \]
     If $g$ is odd, then
     \[   \ab A \leq \sqrt {(g-2)q} + \frac{3}{2} + \frac{1}{g-2}  .  \]
     \end{Cor}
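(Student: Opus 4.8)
The plan is to read off the Corollary from the Theorem, the only extra input being an elementary parity remark that was already noted before the Theorem: for every $x\notin 2\cdot A$ the value $r(x)$ is even. Indeed, if $a_1+a_2=x$ then $a_1\ne a_2$ (otherwise $x=2a_1\in 2\cdot A$), so $(a_1,a_2)\mapsto(a_2,a_1)$ is a fixed-point-free involution on the set of representations of $x$, whence $r(x)$ is even. I will then pick the parameters $k,l$ in the Theorem so that the hypothesis ``$r(x)\le k$ off $2\cdot A$, $r(x)\le k+l$ on $2\cdot A$'' captures the $g$-Sidon condition as tightly as possible, and substitute into \eqref{Aup}.

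\emph{Even $g$.} Since $r(x)\le g$ everywhere and, off $2\cdot A$, this is no constraint beyond the parity just observed, I apply the Theorem with $k=g$ (which is $\ge 2$) and $l=0$. Then \eqref{Aup} gives $|A|<\sqrt{(g-1)q}+1$, which is the stated bound.

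\emph{Odd $g$} (so $g\ge 3$; the case $g=1$ is trivial since then $|A|\le 1$). By the parity remark, ``$r(x)\le g$ and $r(x)$ even'' forces $r(x)\le g-1$ for $x\notin 2\cdot A$, while on $2\cdot A$ we retain only $r(x)\le g$. Hence the hypothesis of the Theorem holds with $k=g-1\ge 2$ and $l=1$, and \eqref{Aup} becomes
\[ |A|<\sqrt{(g-2)q}+1+\frac12+\frac{1\cdot 2}{2(g-2)}=\sqrt{(g-2)q}+\frac32+\frac1{g-2}, \]
as claimed. (The strict inequalities in both cases of course imply the non-strict ones in the statement.)

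There is no serious obstacle here: the entire content is the observation that $r$ is even away from $2\cdot A$, so that an \emph{odd} bound $g$ on $r$ effectively drops to $g-1$ there. The only points needing a little care are that the Theorem requires $k\ge 2$ (hence the separate treatment of the smallest $g$) and that in the odd case one must take $l=1$, not $l=0$, in order to absorb the larger value of $r$ permitted on $2\cdot A$; comparing the two admissible choices $(k,l)=(g-1,1)$ and $(k,l)=(g,0)$ for odd $g$ shows the former is the better one for large $q$, which is why it is used.
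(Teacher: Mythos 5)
Your proposal is correct and follows essentially the same route as the paper, which simply cites the cases $(k,l)=(g,0)$ and $(k,l)=(g-1,1)$ of the Theorem, the parity of $r(x)$ off $2\cdot A$ being the remark made just before its statement. Your explicit justification of that parity (the fixed-point-free involution $(a_1,a_2)\mapsto(a_2,a_1)$) and the aside on small $g$ only make explicit what the paper leaves implicit.
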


     Indeed, these are cases $k=g$, $l=0$ and $k=g-1$, $l=1$ of the previous
theorem.

     \begin{Cor} 
     Let $A \subset {\mathbb {Z}}_q$ be a weak $g$-Sidon set. If $q$ is even, then
     \[   \ab A \leq \sqrt {(g-1)q} + 2 + \frac{3}{g-1}  .  \]
     If $q$ is odd, then
     \[   \ab A \leq \sqrt {(g-1)q} + \frac{3}{2} + \frac{1}{g-1}  .  \]
     \end{Cor}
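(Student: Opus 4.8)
The plan is to reduce the statement to the Theorem by comparing the restricted representation function $r'$ with the ordinary representation function $r$ inside $\mathbb{Z}_q$, and then to apply inequality \eqref{Aup} with a suitable choice of $k$ and $l$.

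First I would use the pointwise relation $r(x) = r'(x) + \sharp\{a \in A : 2a = x\}$, valid in any commutative group. If $x \notin 2\cdot A$ the extra term vanishes, so $r(x) = r'(x) \leq g$; this forces $k = g$ (note $g \geq 2$, so $k \geq 2$ as the Theorem requires). The value of $l$ is governed by how many $a \in A$ can satisfy $2a = x$, i.e. by the number of solutions of $2y = x$ in $\mathbb{Z}_q$.

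When $q$ is odd, $2$ is invertible modulo $q$, so $2y = x$ has a unique solution; hence for $x \in 2\cdot A$ we get $r(x) \leq r'(x) + 1 \leq g+1$, and we may take $k = g$, $l = 1$ in the Theorem. Substituting into \eqref{Aup} yields
\[
\ab A < \sqrt{(g-1)q} + 1 + \frac{1}{2} + \frac{1\cdot 2}{2(g-1)} = \sqrt{(g-1)q} + \frac{3}{2} + \frac{1}{g-1},
\]
which is the claimed bound.

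When $q$ is even there is exactly one element of order $2$ (namely $q/2$), so $2y = x$ has either $0$ or $2$ solutions; thus $r(x) \leq r'(x) + 2 \leq g + 2$ on $2\cdot A$, and we take $k = g$, $l = 2$. Now \eqref{Aup} gives
\[
\ab A < \sqrt{(g-1)q} + 1 + 1 + \frac{2\cdot 3}{2(g-1)} = \sqrt{(g-1)q} + 2 + \frac{3}{g-1},
\]
as required. The only point needing care is this count of square roots of a fixed element of $\mathbb{Z}_q$ according to the parity of $q$; everything else is a direct appeal to the Theorem, and since the conclusions of \eqref{Aup} are strict whereas the stated inequalities are non-strict, no rounding subtlety arises.
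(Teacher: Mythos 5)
Your proposal is correct and takes essentially the same route as the paper: the paper deduces the corollary from the theorem by setting $k=g$, with $l=1$ for odd $q$ and $l=2$ for even $q$, and your count of the solutions of $2y=x$ in $\mathbb{Z}_q$ (one when $q$ is odd, zero or two when $q$ is even) is exactly the implicit justification for that choice of $l$. The substitution into \eqref{Aup} and the remark that the strict inequality implies the stated non-strict bounds match the paper's computation.
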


     To deduce this, we put $k=g$ and $l=2$ if $q$ is even, $l=1$ if $q$ is
odd.

     \begin{proof} 
     Write $\ab A = m$.
     We shall estimate the quantity
     \[   R = \sum  r(x)^2  \]
     in two ways.

     First, observe that
     \[    r(x)^2 - kr(x) = r(x) \left( r(x)-k\right) \leq  \left\{ \begin{array}{lcc}
0, & \text{ if } x \notin  2\cdot A, \\
l(k+l), & \text{ if }  x \in  2\cdot A , \\
\end{array} \right.        \]
     hence
     \[   R \leq  k \sum  r(x) + l(k+l) \ab{2\cdot A} .  \]
     Since clearly $\sum  r(x)= m^2$ and $ \ab {2\cdot A}\leq m$, we conclude
     \begin{equation} \label{Rup}   R \leq  km^2 + l(k+l)m .  \end{equation}

     Write
     $$ d(x) = \sharp \{(a_1,a_2): a_i \in A, \, a_1 - a_2 = x\}.$$
     Clearly $d(0)=m$. We also have $\sum  d(x) = m^2$, and, since the equations
$x+y=u+v$ and $x-u=v-y$ are equivalent,
     \[   \sum  d(x)^2 = R .  \]
     We separate the contribution of $x=0$ and use the inequality of the arithmetic
and quadratic mean to conclude
     \[   R = m^2 + \sum _{x\ne 0} d(x)^2 \geq  m^2 + \frac{1}{q-1} \left( \sum _{x\ne 0} d(x)
\right)^2 > m^2 + \frac{m^2 (m-1)^2}{q} . \]
     A comparison with the upper estimate \eqref  {Rup} yields
     \[  \frac{m^2 (m-1)^2}{q} <   (k-1)m^2 + l(k+l)m .  \]
     This can be rearranged as
     \[   (m-1)^2 < (k-1)q + \frac{l(k+l)q}{m} . \]
     Now if $m<\sqrt {(k-1)q}$, then we are done; if not, we use the opposite
inequality to estimate the second summand and we get
     \[   (m-1)^2 < (k-1)q + \frac{l(k+l)\sqrt q}{\sqrt {k-1}} . \]
     We take square root and use the inequality $\sqrt {x+y}\leq  \sqrt x + \frac{y}{2\sqrt x}$ to
obtain
     \[   m-1 < \sqrt {(k-1)q} + \frac{l(k+l)}{2(k-1)} \]
     which can be written as \eqref  {Aup}.
     \end{proof}

     \section{Construction in certain groups}

     In this section we construct large $g$-Sidon sets in groups $G={\mathbb {Z}}_p^2$, for
primes $p$. We shall establish the following result.

\begin{Th} \label{pxp}
Given $k$,  for every sufficiently large prime $p \geq p_0(k)$ there is
a set $A \ce {\mathbb {Z}}_p^2$ with $kp - k + 1$ elements which is a $g$-Sidon
set for $g= \lfloor k^2 + 2 k^{3/2} \rfloor$.
\end{Th}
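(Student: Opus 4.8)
The natural model for a $g$-Sidon set in $\mathbb{Z}_p^2$ with roughly $kp$ elements is a union of $k$ lines, or better, the graph-type construction
$$A = \{(x, \phi(x)) : x \in \mathbb{Z}_p\} \text{-type unions},$$
but to get $kp - k + 1$ elements and control the representation function one should instead take $A$ to be (a slight modification of) the union of $k$ cosets of a subgroup, or the set $A = \{(i, j) : j \equiv c_i f(i) \}$ indexed cleverly. The cleanest route I would try is: fix a set $D \subset \mathbb{Z}_p$ of size $k$ (to be chosen), and put $A = \{(x, y) : y/x \in D \text{ or } x = 0, y \in \text{something}\}$ — i.e. $A$ is essentially the union of $k$ "lines through the origin" with a careful treatment of the line $x=0$, giving $k(p-1) + (\text{small}) = kp - k + 1$ points if exactly one extra point is added on the vertical axis. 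The key structural fact is that for $a_1 = (x_1, d_1 x_1)$ and $a_2 = (x_2, d_2 x_2)$ with $d_1, d_2 \in D$, the sum $a_1 + a_2 = (x_1 + x_2, d_1 x_1 + d_2 x_2)$ determines, given the value $(s,t)$ and the unordered pair $\{d_1, d_2\}$, the values $x_1, x_2$ uniquely (solving a $2\times 2$ linear system with determinant $d_2 - d_1 \neq 0$ when $d_1 \neq d_2$). So the number of representations of a generic $x$ by a pair with $d_1 \neq d_2$ is at most $2\binom{k}{2} \approx k^2$, and the pairs with $d_1 = d_2$ contribute at most $k$ more. This already gives something like a $(k^2 + k)$-Sidon set; the point of the theorem is the sharper bound $g = \lfloor k^2 + 2k^{3/2}\rfloor$, which is $k^2(1 + o(1))$ and hence asymptotically optimal against $|A|^2/q \approx k^2 p / p = k^2$.

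To get the improved constant $2k^{3/2}$ rather than $k$ (or worse), I expect the actual construction is not "all lines through the origin" but a Bose–Ruzsa–Singer–flavoured choice: take $D$ to be a Sidon set in $\mathbb{Z}_p$ — or more precisely, arrange the $k$ lines so that the slope differences $d_i - d_j$ are themselves spread out (a $B_2$ or $B_2[1]$-type condition on $D$), so that two different unordered pairs $\{d_1, d_2\}$, $\{d_3, d_4\}$ with $d_1 + d_2 = d_3 + d_4$ and $d_1 - d_2 = \pm(d_3 - d_4)$ essentially never both contribute to the same $x$. One then bounds $r(x)$ by counting, for fixed $x = (s,t)$, the quadruples; the main term $k^2$ comes from choosing the unordered pair of slopes freely, and the excess $O(k^{3/2})$ comes from the "diagonal degeneracies" where the linear system for $(x_1, x_2)$ is consistent for several slope-pairs at once. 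The $k^{3/2}$ scale is exactly what a random or Sidon-type choice of $D$ of size $k$ inside $\mathbb{Z}_p$ yields for the worst-case additive overlap, via a first- or second-moment computation (the number of additive quadruples in $D$ is $O(k^3)$ spread over $p$ values of the sum, but the relevant count is of pairs-of-pairs sharing a prescribed linear relation, which scales like $k^{3/2}$ when $D$ is a Sidon set and $p$ is large enough).

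Hence the plan is: (1) choose the slope set $D \subset \mathbb{Z}_p$ of size $k$ to be a Sidon set (available for $p \geq p_0(k)$ by classical constructions), and set $A$ to be the union of the $k$ lines $y = dx$, $d \in D$, with the line $x=0$ replaced by a single point so that $|A| = k(p-1) + 1 = kp - k + 1$; (2) for arbitrary $s,t$, write $r(s,t)$ as a sum over unordered slope-pairs $\{d_i, d_j\}$ of the number of solutions $(x_1, x_2)$ with $x_1 + x_2 = s$, $d_i x_1 + d_j x_2 = t$ — which is $1$ for $i \neq j$ (when the forced $x_1, x_2$ are nonzero and the degenerate line $x=0$ is handled) and bound the $i = j$ terms and the boundary terms separately; (3) observe that the bound "$1$ per slope-pair" naively gives $\binom{k}{2} + \dots$, and the genuine work is to show that after the Sidon condition on $D$ the number of slope-pairs that can \emph{simultaneously} be realized for a fixed $(s,t)$ — equivalently the overlap coming from the few $(s,t)$ where several systems are consistent — adds only $O(k^{3/2})$; (4) combine and round to get $g = \lfloor k^2 + 2k^{3/2} \rfloor$, with the error term absorbed for $p$ large in terms of $k$. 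The main obstacle is step (3): turning the additive structure of $D$ into a sharp $O(k^{3/2})$ bound on the worst-case number of coincident representations requires either an extremal-graph/Sidon counting argument or a probabilistic deletion argument, and getting the constant $2$ (rather than some larger absolute constant) in front of $k^{3/2}$ is where one must be careful — presumably the authors pick $D$ via a specific algebraic construction (perhaps a Bose-type set inside $\mathbb{F}_{p^2}$, reusing the finite-field structure already implicit in $\mathbb{Z}_p^2$) rather than a generic Sidon set, so that the overlap count can be made explicit.
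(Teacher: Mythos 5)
There is a genuine gap, and it is fatal to the construction you propose. A line through the origin $L_d=\{(x,dx):x\in\mathbb{Z}_p\}$ is a subgroup of $\mathbb{Z}_p^2$, so $L_d+L_d=L_d$ and every point of $L_d$ has exactly $p$ representations as a sum of two of its elements; your claim that the pairs with $d_1=d_2$ ``contribute at most $k$ more'' is therefore false --- they contribute $p$ at roughly $kp$ points of the sumset, so the union of lines is not a $g$-Sidon set for any $g$ bounded in terms of $k$ once $p$ is large. The same objection applies to any repair using affine lines, since every line is a coset of a subgroup. Moreover, the Sidon condition on the slope set $D$ buys nothing here: for a generic target each ordered pair of distinct slopes yields exactly one representation no matter what $D$ is, so there is no $k^{3/2}$-sized overlap term that a $B_2$ choice of $D$ could suppress; the proposal has no mechanism that produces the claimed bound $k^2+2k^{3/2}$.

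The paper avoids this by using curves that are far from being cosets: it takes $A_u=\left\{\left(x,\frac{x^2}{u}\right):x\in\mathbb{Z}_p\right\}$ for $k$ consecutive nonzero parameters $u=t+1,\dots,t+k$, so that $|A|=k(p-1)+1$ (all parabolas meet only at $(0,0)$). Solving $(x,x^2/u)+(y,y^2/v)=(a,b)$ reduces to a quadratic in $x$, whence $r_{u,v}(a,b)=1+\left(\frac{\Delta}{p}\right)$ with $\Delta\equiv 4uv\bigl((u+v)b-a^2\bigr)$, i.e.\ each pair of parabolas contributes $1+O(1)$ everywhere --- this replaces the degenerate diagonal behaviour of lines. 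The main term $k^2$ is the number of ordered pairs $(u,v)$, and the excess is a signed character sum: pairs with equal $u+v$ and $\left(\frac{uvu'v'}{p}\right)=-1$ exactly cancel (the paper's Lemma on $r_{u,v}+r_{u',v'}=2$), and the uncancelled part is bounded by $S_t=\sum_{|l|\le k-1}\bigl|\sum_{i+j=k+1+l}\left(\frac{(t+i)(t+j)}{p}\right)\bigr|$. Averaging $S_t$ over $t$ with Cauchy--Schwarz and Weil's bound for $\sum_t\left(\frac{(t+i)(t+j)(t+i')(t+j')}{p}\right)$ shows some admissible $t$ has $S_t<2k^{3/2}$ for $p\ge p_0(k)$, which is where the constant $2k^{3/2}$ actually comes from --- an averaging over translates of the parameter interval, not an extremal property of a slope set.
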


Observe that the trivial upper bound in this case is
    $$|A| \leq \sqrt{gq} \le kp \sqrt{1 + \frac{2}{\sqrt{k}}} < (k+\sqrt k)p . $$

     \begin{proof} 
Let $p$ be a prime.
For every $u \not \equiv 0$ in ${\mathbb {Z}}_p$ consider the set
    $$A_u = \left\{ \left( x, \frac{x^2}{u} \right) : x \in {\mathbb {Z}}_p \right\} \subset  {\mathbb {Z}}_p^2.$$
    Clearly  $|A_u| = p$.

     We are going to study the sumset of two such sets.
For any $\aaa = (a, b) \in {\mathbb {Z}}_p^2$ we shall calculate the representation function
$$r_{u,v}(\aaa) = \sharp \{(\aaa_1, \aaa_2): \aaa_1 \in A_u, \aaa_2 \in A_v, \aaa_1 + \aaa_2 = \aaa \}.$$
The most important property for us sounds as follows.

\begin{Lemma} \label{u+v}
If $u + v \equiv u' + v'$ and $\left(\frac{uvu'v'}{p} \right) = -1$ then $r_{u,v}(x) + r_{u',v'}(x) = 2$ for all $x$.
\end{Lemma}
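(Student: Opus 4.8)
The plan is to reduce the computation of each representation function to counting the roots of a single quadratic over $\mathbb{Z}_p$, and then to use the hypothesis on the Legendre symbol to force the two root counts to sum to $2$. Throughout, $p$ is odd, which we may assume.

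First I would fix $x = (a,b) \in \mathbb{Z}_p^2$ and unravel the definition. A pair $(\aaa_1,\aaa_2) \in A_u \times A_v$ with $\aaa_1 + \aaa_2 = x$ amounts to a choice of $t \in \mathbb{Z}_p$ with $\aaa_1 = (t, t^2/u)$ and $\aaa_2 = (a-t, (a-t)^2/v)$ subject to the single scalar condition $t^2/u + (a-t)^2/v = b$; the correspondence $(\aaa_1,\aaa_2) \mapsto t$ is a bijection onto the solution set. Clearing denominators, this condition becomes
\[ (u+v)\, t^2 - 2 u a\, t + (u a^2 - u v b) = 0 , \]
so $r_{u,v}(x)$ is exactly the number of roots $t \in \mathbb{Z}_p$ of this polynomial, and similarly for $r_{u',v'}(x)$.

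Next I would dispose of the degenerate case: the hypothesis $\left(\frac{uvu'v'}{p}\right) = -1$ forces $u + v \not\equiv 0$, since $u+v \equiv 0$ would give $u'+v' \equiv 0$ as well (as $u+v \equiv u'+v'$), hence $uvu'v' = (-u^2)(-(u')^2) = (uu')^2$, a nonzero square. So, writing $s := u+v = u'+v' \neq 0$, the polynomial above is genuinely quadratic, and a short discriminant computation yields $\tfrac14 \Delta = uv(sb - a^2)$. With the usual convention $\left(\frac{0}{p}\right) = 0$ (which also accounts for the double-root case), the number of roots is therefore $1 + \left(\frac{uv(sb-a^2)}{p}\right)$, and likewise $r_{u',v'}(x) = 1 + \left(\frac{u'v'(sb-a^2)}{p}\right)$.

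Finally I would add the two and check that the Legendre-symbol terms cancel. If $sb - a^2 \equiv 0$ they both vanish. Otherwise $\left(\frac{uv(sb-a^2)}{p}\right) + \left(\frac{u'v'(sb-a^2)}{p}\right) = \left(\frac{sb-a^2}{p}\right)\!\left(\left(\frac{uv}{p}\right) + \left(\frac{u'v'}{p}\right)\right)$, and since $u,v,u',v'$ are all nonzero while $\left(\frac{uv}{p}\right)\left(\frac{u'v'}{p}\right) = \left(\frac{uvu'v'}{p}\right) = -1$, the symbols $\left(\frac{uv}{p}\right)$ and $\left(\frac{u'v'}{p}\right)$ are $+1$ and $-1$ in some order, so their sum is $0$. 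Hence $r_{u,v}(x) + r_{u',v'}(x) = 2$ for every $x$. There is no serious obstacle here; the only care needed is in the bookkeeping of the degenerate cases — the linear case, excluded by the hypothesis, and the double root, absorbed by the convention on $\left(\frac{0}{p}\right)$ — and in getting the discriminant computation right.
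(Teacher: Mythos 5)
Your proof is correct and follows essentially the same route as the paper: reduce each representation count to the number of roots of the quadratic $(u+v)t^2 - 2uat + (ua^2-uvb)$, express it as $1+\left(\frac{\Delta}{p}\right)$ with $\Delta = 4uv\left((u+v)b-a^2\right)$, and use $\left(\frac{uvu'v'}{p}\right)=-1$ to make the two symbol terms cancel. Your explicit check that the hypothesis rules out $u+v\equiv 0$ is a small point the paper leaves tacit, but otherwise the arguments coincide.
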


     \begin{proof} 

If $a \equiv x + y$ and $b \equiv \frac{x^2}{u} + \frac{y^2}{v}$,
with $uv \not \equiv 0$, then $y \equiv a- x$ and we have $b \equiv
\frac{x^2}{u} + \frac{(a - x)^2}{v}$. We can rewrite this equation
as $(u + v)x^2 - 2aux + ua^2 - buv \equiv 0$. The discriminant of
this quadratic equation is
 $\Delta \equiv 4uv((u+v)b - a^2)$. The number of solutions is

\begin{equation*}
r_{u,v}(a,b) =
\left\{
\begin{array}{lccrl}
1 \qquad \text{ if } & \left (\frac{\Delta}p\right )  & = &0 & \\
2 \qquad \text{ if } &\left( \frac{\Delta}{p} \right) &=& + 1&\ \text{(}
\Delta \text{ quadratic residue)}\\
0 \qquad \text{ if } &\left( \frac{\Delta}{p} \right) &=& -1& \ \text{(}
\Delta \text{ quadratic nonresidue)}.\\ \end{array} \right.
\end{equation*}

     We can express this as
\begin{equation*}
r_{u,v}(a,b) = 1 +  \left( \frac{\Delta}{p} \right) .
\end{equation*}

\smallskip

Now, since
$$\Delta\Delta'\equiv 4uv((u+v)b-a^2)4u'v'((u'+v')b-a^2)\equiv
16uvu'v'((u+v)b-a^2)^2$$ we have
$$\left (\frac{\Delta}p\right )\left (\frac{\Delta'}p\right )=\left (\frac{\Delta \Delta'}p\right )=\left (\frac{uvu'v'}p\right
)\left (\frac{((u+v)b-a^2)^2}p\right )=-\left
(\frac{((u+v)b-a^2)^2}p\right ).$$ If $(u+v)b-a^2\equiv 0$, we have
$\left (\frac{\Delta}p\right )=\left (\frac{\Delta'}p\right )=0$. If
not, we have $\left (\frac{\Delta}p\right )\left
(\frac{\Delta'}p\right )=-1$. In any case get
$$\left (\frac{\Delta}p\right )+\left (\frac{\Delta'}p\right )=0.$$
     \end{proof}

     We resume the proof of the theorem.

We put
    $$A = \bigcup_{u = t + 1}^{t + k} A_u. $$
    and we will show that for  a suitable choice of t this will be a good set.

Since $(0,0)\in A_u$  for all $u$ and the rest of the $A_u$'s are disjoint, we have
$|A| = k (p-1) + 1$.

     We can estimate the corresponding representation function as
$$r(x) \leq  \sum _{u,v=t+1}^{t+k} r_{u,v} (x)$$
     (equality fails sometimes, because representations involving $(0,0)$ are
counted once on the left and several times on the right).

     We parametrize the variables of summation as
$u = t + i, v = t + j$ with $1 \leq i, j \leq k$. So $2 \leq i + j \leq 2k$ and we can write
$i + j = k + 1 + l$ with $|l| \leq k - 1$.

     For fixed $l$, we have $k - |l|$ pairs $i, j$ (which means $k - |l|$ pairs
$u, v$). These pairs can be split into two groups: $n^+$ of them will have
$\left( \frac{uv}{p} \right) = 1$ and $n^-$ will have
$\left( \frac{uv}{p} \right) = -1$. Clearly
     \[ n^+ + n^- = k - |l|, \ n^+ - n^- = \sum \left(\frac{uv}{p} \right) .\]

     Of these $n^+ + n^-$ pairs we can combine
$\min\{n^+, n^-\}$ into pairs of pairs with opposite quadratic character, that
is,  with $\left(\frac{uvu'v'}{p} \right) = -1$. For these
we use Lemma \ref{u+v} to estimate the sum of the corresponding representation
functions $r_{u,v}+r_{u',v'}$ by 2. For the uncoupled pairs we can only
estimate the individual values by 2. Altogether this gives

\begin{eqnarray*}
\sum _{i + j= k + 1 + l} r_{u,v} (x) & \leq  & 2 (\min\{n^+, n^-\}) + 2(\max\{n^+, n^-\} -
\min\{n^+, n^-\}) \\ & = & 2(\max\{n^+, n^-\}) \\
 & = & n^+ + n^- + |n^+ - n^-| \\
& = & k - |l| + \left| \sum \left(\frac{uv}{p} \right) \right|. \\
\end{eqnarray*}

     Adding this for all possible value of $l$, for a fixed $t$ we obtain
     $$r(x) \leq k^2 + \sum_{|l| \leq k - 1} \left| \sum_{i + j  = k + 1 + l} \left( \frac{(t + i)(t
+ j)}{p} \right) \right| = k^2 + S_t . $$

     We are going to show that $S_t$ is small on average. Since we need values
with $u, v \not \equiv 0$, we can use only $0\leq t\leq p-1-k$; however, the complete sum is easier
to work with.
Applying the Cauchy-Schwarz
inequality we get
\begin{eqnarray*}
\sum_{t=0}^{p-1} S_t & = & \sum_{t,l} \left| \sum_{i + j = k + 1 + l} \left( \frac{(t + i)(t + j)}{p} \right) \right|\\
& \leq & \sqrt{2kp \sum_{l,t} \left( \sum_{i+j = k+1+l} \left( \frac{(t+i)(t+j)}{p} \right) \right)^2}\\
& \leq & \sqrt{2kp \sum_{i + j = i' + j'} \sum_t \left( \frac{(t+i)(t+j)(t+i')(t+j')}{p} \right)}.
\end{eqnarray*}

     To estimate the inner sum we use Weil's Theorem that asserts
     $$\left| \sum_{t=0}^{p-1} \left( \frac{f(t)}{p} \right) \right| \leq \deg f
\sqrt{p}$$
     for any polynomial $f$ which is not a constant multiple of a square. Hence
     $$ \sum_{t=0}^{p-1} \left( \frac{(t+i)(t+j)(t+i')(t+j')}{p} \right) \leq 4 \sqrt{p} $$
     except when the enumerator as a polynomial of $t$ is a square.

     The numerator will be a square if the four numbers $i,i',j,j'$ form two
equal pairs. This happens exactly $k(2k-1)$ times. Indeed, we may have $i=i'$,
$j=j'$, $k^2$ cases, or $i=j'$, $j=i'$, another $k^2$ cases. The $k$ cases when
all four coincide have been counted twice. Finally, if $i=j$ and $i'=j'$, then
the equality of sums implies that all are equal, so this gives no new case. In
these cases for the sum we use the trivial upper estimate $p$.

     The total number of quadruples $i,i',j,j'$ is $\leq k^3$, since three of them
determine the fourth uniquely.

     Combining our estimates we obtain
$$\sum_{t=0}^{p-1} S_t \leq \sqrt{2p^2k^2(2k-1)+ 8p^{3/2}k^4}. $$
     This implies that there is a value of $t$, $0\leq t\leq p-k-1$ such that
     $$ S_t \leq \frac{\sqrt{2p^2k^2(2k-1)+ 8p^{3/2}k^4}}{p-k} < 2k^{3/2} $$
     if $p$ is large enough. This yields that $r(x)<k^2 + 2k^{3/2}$ as claimed.
     \end{proof}

     \section{Construction in certain cyclic groups}

     In this section we show how to project a set from ${\mathbb {Z}}_p^2$ into ${\mathbb {Z}}_q$ with
$q=p^2s$.

     \begin{Th} 
     Let $A \ce \mathbb{Z}_p^2$ be a $g$-Sidon set with $|A| = m$, and put $q=p^2s$ with
a positive integer $s$. There is a $g'$-Sidon set $A' \ce {\mathbb
{Z}}_q$ with $|A'| = ms$  and $g'=g(s+1)$.
     \end{Th}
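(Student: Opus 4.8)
The natural strategy is to lift $A \subseteq \mathbb{Z}_p^2$ to $\mathbb{Z}_q$ using a section of a group homomorphism. Identify $\mathbb{Z}_q = \mathbb{Z}_{p^2 s}$ and consider the projection $\pi : \mathbb{Z}_q \to \mathbb{Z}_p^2$ that first reduces mod $p^2$ and then uses the isomorphism (or a suitable surjection) $\mathbb{Z}_{p^2} \to \mathbb{Z}_p^2$; alternatively, since what we really need is a map whose fibers over $\mathbb{Z}_p^2$ have a convenient structure, it is cleanest to write each element of $\mathbb{Z}_q$ uniquely as $\xi + p^2 j$ with $\xi$ running over a fixed set of representatives of $\mathbb{Z}_{p^2}$ (lifted from $\mathbb{Z}_p^2$ via a fixed bijection $\phi : \mathbb{Z}_p^2 \to \{0,1,\dots,p^2-1\}$) and $0 \le j \le s-1$. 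For each $\aaa \in A$ pick the $s$ lifts $\phi(\aaa) + p^2 j$, $0 \le j \le s-1$, and let $A'$ be the union of all these; then $|A'| = ms$ provided the lifts are genuinely distinct, which holds because $\phi$ is injective and the $p^2 j$ shifts separate the $j$-layers.

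\textbf{Key steps.} First I would set up the bijection $\phi$ and the decomposition $\mathbb{Z}_q \leftrightarrow \mathbb{Z}_p^2 \times \{0,\dots,s-1\}$ carefully, checking that $A' $ has the claimed size. Second, I would bound the representation function $r_{A'}$. Given $x \in \mathbb{Z}_q$, a representation $x = a'_1 + a'_2$ with $a'_i \in A'$ projects under reduction mod $p^2$ (composed with $\phi^{-1}$, i.e.\ via the quotient map $\mathbb{Z}_q \to \mathbb{Z}_p^2$) to a representation $\bar x = \bar a_1 + \bar a_2$ in $\mathbb{Z}_p^2$, but one must be careful: $\phi$ is only a set-theoretic bijection, not a homomorphism, so $\phi(\aaa_1) + \phi(\aaa_2)$ need not equal $\phi(\aaa_1 + \aaa_2)$ — there is a ``carry'' living in the $p^2$-direction. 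The point is that for fixed $\bar a_1$, the value $\phi(\bar a_1) + \phi(\bar a_2) \bmod p^2$, together with the carry, is determined by $\bar a_2$, so once we fix which pair $(\bar a_1, \bar a_2) \in A \times A$ we are using, the residues of $a'_1, a'_2$ mod $p^2$ are pinned down, and the freedom left is only in the $j$-coordinates: we need $j_1 + j_2 \equiv (\text{something fixed}) \pmod s$ (coming from the carry) with $0 \le j_1, j_2 \le s-1$, which has at most $s+1$ solutions. Hence

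\begin{equation*}
r_{A'}(x) \;\le\; (s+1)\,\sharp\{(\bar a_1,\bar a_2) \in A\times A : \bar a_1 + \bar a_2 = \bar x\} \;=\; (s+1)\, r_A(\bar x) \;\le\; (s+1)g \;=\; g',
\end{equation*}

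which is exactly the bound we want.

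\textbf{Main obstacle.} The delicate point is the bookkeeping around the carry: one must verify that the map from representations of $x$ in $A'$ to pairs (representation of $\bar x$ in $A$, solution $(j_1,j_2)$ of a fixed linear congruence mod $s$) is \emph{injective}, and that the congruence — whose constant term depends on the chosen pair $(\bar a_1,\bar a_2)$ through the carry of $\phi(\bar a_1)+\phi(\bar a_2)$ in $\mathbb{Z}_{p^2}$ and through the reduction of $x$ mod $p^2$ — really has at most $s+1$ solutions in the box $[0,s-1]^2$. The count $s+1$ (rather than $s$) arises precisely because $j_1 + j_2$ ranges over $\{0,\dots,2s-2\}$ and can hit a given residue mod $s$ in two places, with the endpoints giving the extra unit; this is why the statement has $g(s+1)$ and not $gs$. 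I would also double-check the edge effects when a representation uses the special element(s) lifted from $(0,0)$, exactly as in the proof of Theorem \ref{pxp}, but as there this only decreases $r_{A'}$ and so does not affect the upper bound.
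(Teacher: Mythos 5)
Your plan has a genuine gap at its central step: the passage from a representation $x=a_1'+a_2'$ in $\mathbb{Z}_q$ to a representation of $\bar x$ in $\mathbb{Z}_p^2$. There is no surjective homomorphism $\mathbb{Z}_{p^2s}\to\mathbb{Z}_p^2$ (a quotient of a cyclic group is cyclic), so any $\phi$ is necessarily non-additive, and the pairs $(\bar a_1,\bar a_2)\in A\times A$ that can occur in a representation of $x$ are those with $\phi(\bar a_1)+\phi(\bar a_2)\equiv x \pmod{p^2}$ --- not those with $\bar a_1+\bar a_2=\bar x$. The number of such pairs is the representation function of the image set $\phi(A)\subseteq\mathbb{Z}_{p^2}$, which the $g$-Sidon property of $A$ in $\mathbb{Z}_p^2$ does not control: for a generic or adversarial bijection $\phi$ (say one sending $A$ into an arithmetic progression of $\mathbb{Z}_{p^2}$) some of these counts are of order $|A|=m$, so the inequality $r_{A'}(x)\le (s+1)\,r_A(\bar x)$ is simply false for an arbitrary lift. (Two smaller points: the congruence $j_1+j_2\equiv c\pmod s$ with $0\le j_1,j_2\le s-1$ has exactly $s$ solutions, not up to $s+1$; and there is no special element lifted from $(0,0)$ to worry about in this theorem --- that issue belongs to the proof of Theorem \ref{pxp}.)

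Even repairing the lift with the natural digit map $\phi(a,b)=a+bp$ does not give the stated constant. Writing out the carries, a pair $(\bar a_1,\bar a_2)$ can contribute to $x$ only if $\bar a_1+\bar a_2+(0,\delta)=\bar x$, where $\delta\in\{0,1\}$ is the carry from the low digits; so up to $2g$ pairs may occur, and each then allows exactly $s$ choices of $(j_1,j_2)$, yielding only $g'\le 2gs$, weaker than the claimed $g(s+1)$. The paper's proof chooses a different decomposition precisely to avoid this: the elements of $A'$ are $a+cp+bsp$ with $0\le c\le s-1$, i.e.\ the base-$s$ ``multiplicity'' digit is interleaved between the two $\mathbb{Z}_p$ coordinates. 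Then the carry $\delta$ from the $a$-digit is absorbed into the equation for $c_1+c_2$ without disturbing the $\mathbb{Z}_p^2$ target, while the carry $\eta\in\{0,1\}$ from the $c$-digit shifts the target only to $(a,b)-(0,\eta)$; for each $\eta$ there are at most $g$ admissible pairs, and --- this is exactly where $s+1$ comes from --- the number of admissible $(c_1,c_2)$ is at most $c+1$ when $\eta=0$ and at most $s-c$ when $\eta=1$, so the two cases together give $r_{A'}(x)\le g(c+1)+g(s-c)=g(s+1)$. Without this interleaving (or some equivalent device creating that complementarity), your approach cannot reach the bound in the statement.
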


     \begin{proof}
     An element of $A$ is a pair of residues modulo $p$, which we shall
represent by integers in $[0, p-1]$. Given and element $(a,b)\in A$, we put into
$A'$ all numbers of the form
$a + cp + bsp$ with $0 \leq c \leq s-1$. Clearly $\ab {A'}=sm$.

     To estimate the representation function of $A'$ we need to tell,
given $a, b, c$, how many $a_1, b_1, c_1, a_2, b_2, c_2$ are there such that
     \begin{equation} \label{c1}
     a + cp + bsp \equiv a_1 + c_1p + b_1sp + a_2 + c_2 p + b_2sp  \pmod{ p^2s}\end{equation}
     with $ (a_1, b_1), (a_2, b_2) \in A$ and $0 \leq c_1, c_2 \leq s-1$.

     First consider congruence \eqref  {c1} modulo $p$. We have
     $$a \equiv a_1 + a_2 \pmod{ p}, $$
     hence $a_1 + a_2 = a + \delta p$  with $ \delta = 0 $ or 1.
     We substitute this into \eqref  {c1}, substract $a$ and divide by $p$ to
obtain
      $$c + bs \equiv \delta + c_1 + c_2 + (b_1 + b_2)s \pmod{ ps} . $$
      We take this modulo $s$:
$$c \equiv \delta + c_1 + c_2 \pmod{ s}, $$
     consequently $\delta + c_1 + c_2 = c + \eta s$  with $ \eta = 0 $ or 1.
     Again substituting back, substracting $c$ and dividing by $s$ we obtain
     $$b \equiv \eta + b_1 + b_2 \pmod{ p}.$$

So $(a,b) = (a_1, b_1) + (a_2, b_2) + (0,\eta)$ which means that for $a,
b, \eta$ given, we have $\leq g$  possible values of $a_1, b_1, a_2, b_2$.

    Now we are going to find the number of possible values of $c_1, c_2$
for $a, b, c, \eta,
a_1, b_1, a_2, b_2$ given.

     Observe that from these data we can calculate $\delta = (a_1 + a_2 - a)/p$.
For $c_1, c_2$ we have the equation
$c_1 + c_2 = c - \delta + \eta s$.

     If $\eta =0$, we have $c_1\leq c$, at most $c+1$ possibilities.

     If $\eta =1$, we have $c_1+c_2\geq c+s-1$, hence $c-1 < c_1\leq s-1$, which gives at
most $s-c$ possibilities.

     Hence, if $a,b,c,\eta $ are given, our estimate is $g(c+1)$ or $g(s-c)$,
depending on $\eta $. Adding the two estimates we get the claimed bound $g(s+1)$.
     \end{proof}

     On combining this result with Theorem \ref{pxp} we obtain the following
result.

\begin{Th} \label{ciclicp2s}
     For any positive integers $k,s$, for every sufficiently large prime $p$,
there is a set
$A \ce {\mathbb {Z}}_{p^2s}$ with $(kp - k + 1)s$ elements which is a $\lfloor k^2 + 2
k^{3/2} \rfloor(s+1)$-Sidon set.
     \end{Th}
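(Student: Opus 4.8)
The statement follows by simply chaining together the two previous theorems of this section, so the ``proof'' is a one-line combination plus a bookkeeping check on the parameters. First I would invoke Theorem \ref{pxp}: given $k$, for every sufficiently large prime $p \geq p_0(k)$ there is a set $A \ce \mathbb{Z}_p^2$ with $|A| = kp - k + 1$ elements which is a $g$-Sidon set for $g = \lfloor k^2 + 2k^{3/2} \rfloor$. Then I would feed this $A$ into the projection theorem (the one immediately preceding this statement), taking the integer parameter there to be our given $s$ and setting $q = p^2 s$. That theorem produces a $g'$-Sidon set $A' \ce \mathbb{Z}_{p^2 s}$ with $|A'| = |A| \cdot s = (kp - k + 1)s$ elements and with $g' = g(s+1) = \lfloor k^2 + 2k^{3/2}\rfloor(s+1)$, which is exactly the claimed conclusion.

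The only thing to verify is that the hypotheses line up: the projection theorem requires its input to be a $g$-Sidon set in $\mathbb{Z}_p^2$, and Theorem \ref{pxp} delivers precisely that, with the matching value of $g$; the cardinality $m = kp-k+1$ is the ``$m$'' in the projection theorem, so $|A'| = ms = (kp-k+1)s$ transcribes directly. The phrase ``for every sufficiently large prime $p$'' in the conclusion is inherited verbatim from the ``$p \geq p_0(k)$'' in Theorem \ref{pxp}; note $p_0$ depends only on $k$, not on $s$, so the statement is uniform in $s$ as written.

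There is essentially no obstacle here — the work has all been done in Theorems \ref{pxp} and the projection lemma. If anything, the one place that warrants a remark (but not extra argument) is that we have made no attempt to optimize: the resulting set has about $ks(p)$ elements inside $\mathbb{Z}_{p^2 s}$ of size $p^2 s$, so the density is of order $k/p$, whereas the trivial upper bound for a $g'$-Sidon set with $g' \approx k^2 s$ allows density up to roughly $\sqrt{g'/q} \approx \sqrt{k^2 s / (p^2 s)} = k/p$; thus the construction is already of the right order of magnitude, and the refined analysis needed to extract the sharp constant $\sigma$ for integers is postponed to the later sections. For the present theorem, the proof is just:

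\begin{proof}
Apply Theorem \ref{pxp} to obtain, for every prime $p \geq p_0(k)$, a $g$-Sidon set $A \ce \mathbb{Z}_p^2$ with $|A| = kp - k + 1$ and $g = \lfloor k^2 + 2k^{3/2}\rfloor$. Now apply the preceding theorem with this $A$, with the integer parameter equal to $s$ and $q = p^2 s$: it yields a $g'$-Sidon set $A' \ce \mathbb{Z}_{p^2 s}$ with $|A'| = |A|\cdot s = (kp - k + 1)s$ and $g' = g(s+1) = \lfloor k^2 + 2k^{3/2}\rfloor(s+1)$, as claimed.
\end{proof}
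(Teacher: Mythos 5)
Your proposal is correct and is exactly the paper's argument: the paper states Theorem \ref{ciclicp2s} as the immediate combination of Theorem \ref{pxp} with the preceding projection theorem, which is precisely the chaining you carry out, with the same parameter bookkeeping ($m = kp-k+1$, $q = p^2 s$, $g' = g(s+1)$). Nothing is missing.
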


 Put $q=p^2s$ and $g=\lfloor k^2 + 2
k^{3/2} \rfloor (s+1)$.  Thus,
\begin{eqnarray*}
\frac{\alpha_g(q)}{\sqrt{gq}} \ge  \frac{|A|}{\sqrt{gq}} & = &
\frac{(kp-k+1)s}{\sqrt{\lfloor k^2 + 2 k^{3/2} \rfloor (s+1)
p^2s}} \\
& \ge & \frac{(kp-k)s}{\sqrt{(k^2 + 2 k^{3/2})(s+1) p^2s}}\\
& \ge & \frac{p-1}{p\sqrt{(1 + 2/\sqrt k)(1+1/s)}}.
\end{eqnarray*}

A convenient choice of the parameters is  $k=4s^2$ (so $s = \Theta(g^{1/5})$). Assuming that, we get
\begin{equation*}
\frac{\alpha_g(q)}{\sqrt{gq}}\ge \frac{p-1}p\cdot \frac 1{1+1/s}.
\end{equation*}
Thus, the Prime Number Theorem says that
\begin{equation*}
\frac{\alpha_g}{\sqrt g}=\limsup_{q\to
\infty}\frac{\alpha_g(q)}{\sqrt{gq}}\ge \limsup_{p\to \infty
}\frac{p-1}p\cdot \frac 1{1+1/s}=1+O(g^{-1/5}),
\end{equation*}
which completes the proof of Theorem \ref{modular}.


\section{Upper bound}

We turn now to the proof of Theorem \ref{integer}, which says:
$$\lim_{g \rightarrow \infty} \liminf_{N \rightarrow \infty} \frac{\beta_g(N)}{\sqrt{g}{\sqrt{N}}} = \lim_{g \rightarrow \infty} \limsup_{N \rightarrow \infty} \frac{\beta_g(N)}{\sqrt{g}{\sqrt{N}}} = \sigma.$$

We will prove it in two stages:
\begin{enumerate}
\item [Part A.] $$\limsup_{g \rightarrow \infty} \limsup_{N \rightarrow \infty} \frac{\beta_g(N)}{\sqrt{g}{\sqrt{N}}} \le \sigma.$$
\item [Part B.] $$\liminf_{g \rightarrow \infty} \liminf_{N \rightarrow \infty} \frac{\beta_g(N)}{\sqrt{g}{\sqrt{N}}} \ge \sigma.$$
\end{enumerate}

For Part A we will use the ideas of Schinzel and Schmidt
\cite{Schin}, which give a connection between convolutions and number
of representations, between the continuous and the discrete world.
For the sake of completeness we rewrite the results and the proofs
in a more convenient way for our purposes.

\bigskip

Remember from (\ref{sigma}) the definition of $\sigma$:
$$\sigma = \sup_{f \in \mathcal{F}} |f|_1,$$
where $\mathcal{F} = \{f: f \ge 0, \ \text{supp}(f) \ce [0,1], \ |f*f|_{\infty} \le 1 \}.$

\smallskip

We will use the next result, which is assertion (ii) of Theorem 1 in
\cite{Schin} (essentially the same result appears in \cite{Mar2} as Corollary 1.5):
\begin{Th} \label{PolSchin}
Let $\sigma$ be the constant defined above and $\mathcal{Q}_N = \{Q \in \mathbb{R}_{\ge 0} [x] : Q \not \equiv 0, \deg Q < N\}$. Then
$$\sup_{Q \in \mathcal{Q}_N} \frac{|Q|_1}{\sqrt{N} \sqrt{|Q^2|_{\infty}}} \le \sigma,$$
where $|P|_1$ is the sum and $|P|_{\infty}$ the maximum of the coefficients of a polynomial $P$.
\end{Th}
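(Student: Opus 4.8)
The plan is to pass from polynomials to functions: to each $Q\in\mc Q_N$ I would associate a step function $f$ on $[0,1]$ built from the coefficients of $Q$, arrange that $f\in\mc F$ after one rescaling, and then read off the bound from the inequality $|f|_1\le\sigma$. The point is that $|f|_1$ will be proportional to $|Q|_1$ while $|f*f|_\infty$ will be proportional to $|Q^2|_\infty/N$, and these two proportionality constants combine to produce exactly the factor $\sqrt N\,\sqrt{|Q^2|_\infty}$ in the denominator.

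Concretely, write $Q(x)=\sum_{j=0}^{N-1}q_jx^j$ with $q_j\ge 0$, let $\chi=\I_{[0,1/N)}$, and set
\[
f \;=\; c\sum_{j=0}^{N-1}q_j\,\chi\!\left(\,\cdot-\tfrac jN\right),
\]
i.e. $f$ takes the value $cq_j$ on $[j/N,(j+1)/N)$ and vanishes outside $[0,1)$, where $c>0$ is to be fixed. Then $f\ge 0$, $\operatorname{supp}f\ce[0,1]$, and $|f|_1=\frac cN|Q|_1$.

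Next I would compute the autocorrelation using bilinearity of convolution:
\[
f*f \;=\; c^2\sum_{i,j}q_iq_j\,(\chi*\chi)\!\left(\,\cdot-\tfrac{i+j}N\right)
\;=\; c^2\sum_{m=0}^{2N-2}(Q^2)_m\,T\!\left(\,\cdot-\tfrac mN\right),
\]
where $(Q^2)_m=\sum_{i+j=m}q_iq_j$ is the $m$-th coefficient of $Q^2$ and $T:=\chi*\chi$ is the tent function supported on $[0,2/N]$, peaking at $1/N$ with height $1/N$. The one observation that needs a line of care is that consecutive tents split the mass evenly: writing a point of the $(k{+}1)$-st subinterval of $[0,2]$ as $x=\tfrac{k+\theta}N$ with $\theta\in[0,1]$, one checks $T(x-\tfrac{k-1}N)+T(x-\tfrac kN)=\tfrac1N$, and no other translate $T(\cdot-\tfrac mN)$ is nonzero there. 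Hence on that subinterval $f*f(x)=\frac{c^2}N\bigl((1-\theta)(Q^2)_{k-1}+\theta(Q^2)_k\bigr)$ is a convex combination of coefficients of $Q^2$ (on the two extreme subintervals only one tent contributes, which is even smaller), so $|f*f|_\infty\le\frac{c^2}N|Q^2|_\infty$.

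Finally, since $Q\not\equiv0$ forces $|Q^2|_\infty>0$, I would choose $c=\sqrt{N/|Q^2|_\infty}$, so that $|f*f|_\infty\le1$ and thus $f\in\mc F$. Then
\[
\sigma\;\ge\;|f|_1\;=\;\frac cN|Q|_1\;=\;\frac{|Q|_1}{\sqrt N\,\sqrt{|Q^2|_\infty}},
\]
and taking the supremum over $Q\in\mc Q_N$ yields the theorem. I expect the only mildly delicate point to be the bookkeeping around the tent decomposition and the two boundary subintervals; everything else is just bilinearity of convolution and a single normalization.
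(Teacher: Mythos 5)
Your proof is correct and follows essentially the same route as the paper: you build the same step function from the coefficients of $Q$, compute the same piecewise-linear autoconvolution (your tent-function decomposition gives exactly the paper's polygonal formula), and conclude via $\sigma$. The only cosmetic difference is that you normalize explicitly by $c=\sqrt{N/|Q^2|_\infty}$ to land in $\mathcal F$, whereas the paper invokes the equivalent scale-invariant form $\sigma=\sup |g|_1/\sqrt{|g*g|_\infty}$; these are the same observation.
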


\begin{proof}
First of all, observe that the definition of $\sigma$ is equivalent to this one:
$$\sigma = \sup_{g \in \mathcal{G}} \frac{|g|_1}{\sqrt{|g*g|_{\infty}}},$$
where $\mathcal{G} = \{g: g \ge 0, \ \text{supp}(g) \ce [0,1] \}.$

\smallskip

Given a polynomial $Q = a_0 + a_1 x + \ldots + a_{N-1} x^{N-1}$ in $\mathcal{Q}_N$, we define the step function $g$ with support in $[0,1)$ having
$$g(x) = a_i \ \text{ for } \ \frac{i}{N} \le x < \frac{i+1}{N} \ \text{ for every } \ i = 0, 1, \ldots, N-1.$$

The convolution of this step function with itself is the polygonal function:
$$ g*g(x) = \sum_{i=0}^{j}a_i a_{j-i} \left(x - \frac{j}{N} \right) + \sum_{i=0}^{j-1}a_i a_{j-1-i} \left(\frac{j+1}{N} - x \right) \text{ if } x \in \left[ \frac{j}{N}, \frac{j+1}{N} \right)$$
for every $j=0, 1, \ldots, 2N - 1$, where we define $a_N = a_{N+1} = \ldots = a_{2N-1} = 0$.

So, $$\sup_{x} (g*g)(x) = \frac{1}{N} \sup_{0 \le j \le 2N-2} \left( \sum_{i=0}^{j} a_i a_{j-i} \right).$$

Since, obviously, $\int_0^1 g(x) \ dx = \frac{1}{N} \sum_{i=0}^{N-1} a_i$, we have:
$$\frac{|Q|_1}{\sqrt{N} \sqrt{|Q^2|_{\infty}}} = \frac{\int_0^1 g(x) \ dx}{\sqrt{\sup_{x} (g*g)(x)}} \le \sigma.$$

And because we have this for every $Q$, the theorem is proved.
\end{proof}

Now, given a $g$-Sidon set $A \ \ce \ \{0, 1, \ldots, N - 1 \}$, we define the polynomial $Q_A(x) = \sum_{a \in A} x^a$, so $Q_A^2(x) = \sum_{n} r(n) x^n$. Then, Theorem \ref{PolSchin} says that
$$\sigma \ge \frac{|Q_A|_1}{\sqrt{|Q_A^2|_{\infty}} \sqrt{N}} \ge \frac{|A|}{\sqrt{g} \sqrt{N}}.$$

Since this happens for every $g$-Sidon set in $\{0, 1, \ldots, N-1\}$, we have that
$$\frac{\beta_g(N)}{\sqrt{g}{\sqrt{N}}} \le \sigma.$$
This proves Part A of Theorem \ref{integer}, which is the easy
part.

\begin{Rem}
In fact, not only Schinzel and Schmidt prove the result above in \cite{Schin}, but they also prove (see Theorem \ref{SchSch}) that
$$\lim_{N \to \infty} \sup_{Q \in \mathcal{Q}_N} \frac{|Q|_1}{\sqrt{N} \sqrt{|Q^2|_{\infty}}} = \sigma.$$

Newman polynomials are polynomials all of whose coefficients are 0 or 1. In \cite{Yu}, Gang Yu conjectured that for every sequence of Newman polynomials $Q_N$ with $\deg Q_N = N-1$ and $|Q_N|_1 = o(N)$
$$\limsup_{N \to \infty} \frac{|Q_N|_1}{\sqrt{N} \sqrt{|Q_N^2|_{\infty}}} \le 1.$$

Greg Martin and Kevin O'Bryant \cite{Mar2} disproved this conjecture, finding a sequence of Newman polynomials with $\deg Q_N = N-1$, $|Q_N|_1 = o(N)$ and $$\limsup_{N \to \infty} \frac{|Q_N|_1}{\sqrt{N} \sqrt{|Q_N^2|_{\infty}}} = \frac{2}{\sqrt{\pi}}.$$

In fact, with the probabilistic method it can be proved without much effort that there is a sequence of Newman polynomials, with $\deg Q_N = N-1$ and $|Q_N|_1 = O(N^{1/2} (\log N)^{\beta})$ for any given $\beta > 1/2$, such that
$$\limsup_{N \to \infty} \frac{|Q_N|_1}{\sqrt{N} \sqrt{|Q_N^2|_{\infty}}} = \sigma.$$

Our Theorem \ref{integer} says that given $\varepsilon > 0$, there exists a constant $c_\varepsilon$ and a sequence of polynomials, $Q_N$, with $\deg Q_N = N-1$ and $|Q_N|_1 \le c_{\varepsilon} N^{1/2}$ such that
$$\limsup_{N \to \infty} \frac{|Q_N|_1}{\sqrt{N} \sqrt{|Q_N^2|_{\infty}}} \ge \sigma - \varepsilon.$$

Observe that this growth is close to the best possible, since taking $|Q_N|_1 = o(N^{1/2})$ makes $\frac{|Q_N|_1}{\sqrt{N} \sqrt{|Q_N^2|_{\infty}}} \to 0$.
\end{Rem}

\section{Connecting the discrete and the continuous world}

For Part B of the proof of Theorem \ref{integer} we will need
another result of Schinzel and Schmidt (assertion (iii) of Theorem 1
in \cite{Schin}) which we state in a more convenient form for our
purposes:

\begin{Th} \label{SchSch}
For every $0 < \alpha < 1/2$, for any $0 < \varepsilon < 1$ and for every $n > n(\varepsilon)$, there exist non-negative real numbers $a_0, a_1, \ldots, a_n$ such that
\begin {enumerate}
 \item $a_i \le n^{\alpha} (1 - \varepsilon)$ for every $i = 0, 1, \ldots, n$.
 \item $\sum_{i=0}^{n} a_i \ge n \sigma (1 - \varepsilon)$.
 \item $\sum_{0 \le i, m-i \le n} a_i a_{m-i} \le n (1 + \varepsilon)$ for every $m = 0, 1, \ldots, 2n$.
\end {enumerate}
\end{Th}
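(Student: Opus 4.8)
The plan is to produce $a_0,\dots,a_n$ by discretizing a near-extremal function for $\sigma$, after first \emph{regularizing} that function so that it is bounded and Lipschitz — boundedness is what will give condition (1), and the Lipschitz property is what makes the convolution stable under discretization. Fix $\varepsilon$ and, from $\sigma=\sup_{f\in\mathcal F}|f|_1$, choose $f\ge 0$ with $\mathrm{supp}(f)\subseteq[0,1]$, $|f*f|_\infty\le 1$ and $|f|_1\ge\sigma(1-\varepsilon/4)$. Since such an $f$ may be unbounded (think of $1/\sqrt{\pi x}$), first truncate: $f_M:=\min(f,M)$. From $0\le f_M\le f$ we get $f_M*f_M\le f*f$ pointwise, so $f_M\in\mathcal F$, and $\int f_M\uparrow\int f$ by monotone convergence, so we may fix $M$ with $|f_M|_1\ge\sigma(1-\varepsilon/3)$ and $f_M$ bounded by $M$. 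Next shrink the support, $\tilde f(x):=f_M\bigl((x-\delta)/(1-2\delta)\bigr)$, supported in $[\delta,1-\delta]$ with $|\tilde f|_1=(1-2\delta)|f_M|_1$ and $|\tilde f*\tilde f|_\infty=(1-2\delta)|f_M*f_M|_\infty\le 1$, and mollify: $h:=\tilde f*\psi_\delta$ for a smooth $\psi_\delta\ge 0$ of mass $1$ supported in $[-\delta/2,\delta/2]$. Then $h$ is $C^\infty$ (hence $L$-Lipschitz for some $L=L(M,\delta)$), nonnegative, supported in $[0,1]$, bounded by $M$, with $|h|_1=|\tilde f|_1$ and $|h*h|_\infty=|(\tilde f*\tilde f)*(\psi_\delta*\psi_\delta)|_\infty\le|\tilde f*\tilde f|_\infty\le 1$; choosing $\delta$ small, $|h|_1\ge\sigma(1-\varepsilon/2)$.

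Now discretize with $N=n+1$ steps: put $a_i:=N\int_{i/N}^{(i+1)/N}h$ for $i=0,\dots,n$, so $a_i\ge 0$ and the step function $g:=\sum_i a_i\,\mathbf 1_{[i/N,(i+1)/N)}$ satisfies $\int_0^1 g=\int_0^1 h=|h|_1$ and, $h$ being $L$-Lipschitz, $|g-h|_\infty\le L/N$. Condition (1) holds for large $n$ since $a_i\le|h|_\infty+L/N\le M+1<n^\alpha(1-\varepsilon)$ once $n^\alpha>M+1$, which is where $\alpha>0$ is used. Condition (2) holds for large $n$ since $\sum_{i=0}^n a_i=N|h|_1=(n+1)|h|_1\ge(n+1)\sigma(1-\varepsilon/2)\ge n\sigma(1-\varepsilon)$. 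For condition (3), the identity from the proof of Theorem~\ref{PolSchin} gives $\sum_{0\le i,\,m-i\le n}a_i a_{m-i}\le N|g*g|_\infty$ for every $m$; writing $g=h+(g-h)$, expanding, and using $|u*v|_\infty\le|u|_1|v|_\infty$, we get $|g*g|_\infty\le|h*h|_\infty+2|h|_1|g-h|_\infty+|g-h|_1|g-h|_\infty\le 1+C/N$ with $C=C(\varepsilon)$, so the sum is $\le(n+1)(1+C/N)\le n(1+\varepsilon)$ for large $n$. Letting $n(\varepsilon)$ exceed the finitely many thresholds above finishes the proof.

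The delicate part is the regularization in the first paragraph; the discretization and verification are routine once it is done. A generic near-extremal $f$ cannot be discretized directly: if it is unbounded condition (1) fails, and even a bounded but oscillatory $f$ is only approximated in $L^2$ by its piecewise averages, which gives no control over $|g*g|_\infty$. The two facts that make it work are that truncation can only decrease $|f*f|_\infty$ while, by monotone convergence, costing essentially nothing in $|f|_1$; and that mollification keeps us inside $\mathcal F$ — the support is pre-shrunk to absorb the spreading, and the $\infty$-norm of a convolution is submultiplicative against an $L^1$-normalized kernel — while upgrading $f$ to a Lipschitz function that its step-function discretization approximates uniformly.
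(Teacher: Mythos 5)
Your proof is correct, but it takes a genuinely different route from the paper's. The paper never regularizes the near-extremal function: it works with an arbitrary (possibly unbounded) $g\in\mathcal F$ with $|g|_1\ge\sigma(1-\varepsilon/2)$ and discretizes by \emph{overlapping} sliding-window averages $a_i=\frac{n}{2L}\int_{(i-L)/n}^{(i+L)/n}g$, with $L=\lceil n^{1-2\alpha}/(1-\varepsilon)^2\rceil$. Condition (1) is then extracted directly from the a priori estimate $\int_r^s g\le\sqrt{2(s-r)}$ (itself a consequence of $|g*g|_\infty\le 1$), which is exactly what makes the exponent $\alpha<1/2$ appear, and conditions (2) and (3) follow from counting the covering multiplicities $\nu(x)$ and $\mu(x,z)$ of the windows. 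You instead pay an $\varepsilon$-loss up front to replace $f$ by a bounded Lipschitz member $h$ of $\mathcal F$ (truncation costs little by monotone convergence and only decreases $f*f$; pre-shrinking the support absorbs the spread of the mollifier, and Young's inequality keeps $|h*h|_\infty\le 1$), after which plain non-overlapping interval averages approximate $h$ uniformly and (3) reduces to the step-function identity from Theorem \ref{PolSchin} plus a perturbation bound. Your version even gives a stronger form of (1) — a bound $M(\varepsilon)$ independent of $n$ — whereas the paper's argument gives precisely $n^{\alpha}(1-\varepsilon)$ but needs no smoothing machinery and yields explicit constants; both your thresholds and the paper's depend on $\alpha$ as well as $\varepsilon$, which is harmless since $\alpha$ is later fixed at $1/3$. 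All the individual steps you use (monotonicity of convolution under $0\le f_M\le f$, $|\tilde f*\tilde f|_\infty=(1-2\delta)|f_M*f_M|_\infty$, $|u*v|_\infty\le|u|_1|v|_\infty$ for the error terms, $\sum_i a_ia_{m-i}\le N|g*g|_\infty$) check out, so the argument stands as a valid alternative proof.
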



\begin{proof}
We start with a real nonnegative function defined in $[0,1]$, $g$, with $|g * g|_{\infty} \le 1$ and $|g|_1$ close to $\sigma$, say $|g|_1 \ge \sigma (1 - \varepsilon/2)$.

\bigskip

For $r < s$ we have the estimation
\begin{eqnarray} \label{estim1}
\left( \int_{r}^{s} g(x) \ dx \right)^2 & = & \int_{r}^{s} \int_{r}^{s} g(x) g(y) \ dx \ dy \nonumber \\
& = & \int_{r + x}^{s + x} \int_{r}^{s} g(x) g(z - x) \ dx \ dz \\
& \le & \int_{2r}^{2s} \int_{r}^{s} g(x) g(z - x) \ dx \ dz \le 2 (s - r) \nonumber
\end{eqnarray}

which implies that
\begin{equation} \label{estim}
\int_{r}^{s} g(x) \ dx \le \sqrt{2(s-r)}.
\end{equation}

\bigskip

Trying to ``discretize'' our function $g$, we define for $i = 0, 1, 2, \ldots, n$:
$$a_i = \frac{n}{2L} \int_{(i - L)/n}^{(i + L)/n} g(x) \ dx$$
where $1 \le L \le n/2$ is an integer that will be determined later.

\bigskip

Estimation (\ref{estim}) proves that
\begin {equation} \label{primera}
a_i \le \sqrt{n/L} \ \text{ for } \ i = 0, 1, 2, \ldots, n.
\end {equation}

\bigskip

Now we give a lower bound for the sum $\sum_{i=0}^{n} a_i$:
$$\sum_{i=0}^{n} a_i = \frac{n}{2L} \int_{0}^{1} \nu(x) g(x) \ dx,$$
where
\begin{eqnarray*}
\nu (x) & = & \sharp \left\{ i \in [0,n] : \frac{i-L}{n} \le x \le \frac{i+L}{n} \right\} \\
& = & \sharp \left\{ i : \max\{ 0, nx - L \} \le i \le \min \{ n, nx + L \} \right\}.
\end{eqnarray*}

\smallskip

Taking in account that an interval of length $M$ has $\ge \lfloor M \rfloor$ integers and an interval of length $M$ starting or finishing at an integer has $\lceil M \rceil$ integers, and since $L \in \mathbb Z$ and $1 \le L \le n/2$, we have
\begin{displaymath}
\nu (x) \ge
\left\{
\begin{array}{lll}
nx + L = 2L - (L - nx) & \textrm{ if } 0 \le x \le L/n \\
2L & \textrm{ if } L/n \le x \le 1 - L/n \\
n - nx + L = 2L - (L - n(1-x)) & \textrm{ if } 1 - L/n \le x \le 1 \\
\end{array}
\right.
\end{displaymath}
and so
$$\sum_{i=0}^{n} a_i \ge n \int_{0}^{1} g(x) \ dx - \frac{n}{2L} \int_{0}^{L/n} (L - nx) g(x) \ dx - \frac{n}{2L} \int_{1 - L/n}^{1} (L - n(1-x)) g(x) \ dx.$$

Now, using the fact that $|g|_1 \ge \sigma (1 - \varepsilon / 2)$ and estimation (\ref{estim}),
\begin{equation} \label{segunda}
\sum_{i=0}^{n} a_i \ge n \sigma (1 - \varepsilon / 2) - \sqrt{2nL}.
\end{equation}

\bigskip

Also, for every $m \le 2n$ we give an upper bound for the sum $\sum_{0 \le i, m-i \le n} a_i a_{m-i}$. First we write:

$$\sum_{0 \le i, m-i \le n} a_i a_{m-i} = \left( \frac{n}{2L} \right)^2 \sum_{0 \le i, m-i \le n} \int_{(m - i - L)/n}^{(m - i + L)/n} \int_{(i - L)/n}^{(i + L)/n} g(x) g(y) \ dx \ dy.$$

\smallskip

Now, as in (\ref{estim1}), we set $z = x + y$ and we consider the set:
$$S_i = \left\{(x, z) : \frac{i - L}{n} \le x \le \frac{i+L}{n} \ \text{ and } \ \frac{m - i - L}{n} \le z - x \le \frac{m - i + L}{n} \right\}.$$

Then,
$$\sum_{0 \le i, m-i \le n} a_i a_{m-i} = \left( \frac{n}{2L} \right)^2 \sum_{0 \le i, m-i \le n} \int \int_{S_i} g(x) g(z - x) \ dx \ dz$$
and, defining $\mu(x,z) = \sharp \{\max\{0, m-n\} \le i \le \min\{m, n\} : i - L \le nx \le i + L \ \text{  and  } \ m - i - L \le n(z - x) \le m - i + L \}$,
$$\sum_{0 \le i, m-i \le n} a_i a_{m-i} = \left( \frac{n}{2L} \right)^2 \int \int \mu(x, z) g(x) g(z - x) \ dx \ dz.$$
If we write $h = i - nx$ then we are imposing $-L \le h \le L$ and $m - L - nz \le h \le m + L - nz$, so $$-L + \max \{0, m - nz \} \le h \le L + \min \{0, m - nz \},$$
and $\mu (x,z) \le \lambda(z)$, which is the number of $h$'s in this interval (it could be empty), and this number is clearly $\le 2L + 1$. Also, for each fixed $h$, $z$ moves in an interval of length $2L/n$.

This means (remember that $|g * g|_{\infty} \le 1$)
\begin{eqnarray*}
\sum_{0 \le i, m-i \le n} a_i a_{m-i} & \le & \left( \frac{n}{2L} \right)^2 \int \lambda (z) \int g(x) g(z - x) \ dx \ dz \\
& \le & \left( \frac{n}{2L} \right)^2 \int \lambda (z) \ dz \\
& \le & \left( \frac{n}{2L} \right)^2  \frac{2L(2L + 1)}{n}
\end{eqnarray*}
so the sum
\begin{equation} \label{tercera}
\sum_{0 \le i, m-i \le n} a_i a_{m-i} \le n \left( 1 + \frac{1}{2L} \right).
\end{equation}

Finally, looking at (\ref{primera}), (\ref{segunda}) and (\ref{tercera}), and choosing the integer $L = \lceil n^{1 - 2 \alpha} / (1 - \varepsilon)^2 \rceil$ with $0 < \alpha < 1/2$, for sufficiently large $n$ we'll have:
$$a_i \le n^{\alpha} (1 - \varepsilon) \qquad , \qquad \sum_{i=0}^{n} a_i \ge n \sigma (1 - \varepsilon) \qquad \text{and} \qquad \sum_{0 \le i, m-i \le n} a_i a_{m-i} \le n ( 1 + \varepsilon).$$
\end{proof}

\begin{Rem}
Now, we will construct random sets. We want to use the numbers obtained in Theorem \ref{SchSch} to define probabilities, $p_i$, and it will be convenient to know the sum of the $p_i$'s. This is the motivation for defining
$$p_i = a_i \cdot \dfrac{\sigma n^{1 - \alpha}}{\sum_{i=0}^{n} a_i} \quad \text{ for } \quad i = 0, 1, \ldots, n.$$
\end{Rem}

\smallskip

Now we fix $\alpha=1/3$, although any $\alpha \in (0,1/2)$ would
work. Then we have $p_i = a_i \cdot \dfrac{\sigma
n^{2/3}}{\sum_{i=0}^{n} a_i}$, so for any $0 < \varepsilon < 1$ and
for every $n > n(\varepsilon)$, we have $p_0, p_1, \ldots, p_n$ such
that:
$$p_i \le 1 \qquad , \qquad \sum_{i=0}^{n} p_i = \sigma n^{2/3} \qquad \text{ and } \qquad  \sum_{0 \le i, m-i \le n} p_i p_{m-i} \le n^{1/3} \dfrac{1 + \varepsilon}{(1 - \varepsilon)^2}.$$

\bigskip

In order to prove that the number of elements and the number of representations in our probabilistic sets are what we expect with high probability, we'll use Chernoff's inequality (see Corollary 1.9 in \cite{Tao}).

\begin{Prop} \label{Cher}
\textbf{(Chernoff's inequality)} Let $X=t_1+\cdots +t_n$ where the $t_i$ are independent Boolean random variables. Then for
any $\delta>0$
\begin{eqnarray}
\PP(|X-\E(X)|\ge \delta\E(X)) \le 2e^{-\min(\delta^2/4,\delta/2)\E(X)}.
\end{eqnarray}
\end{Prop}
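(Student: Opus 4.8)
The plan is to use the exponential moment method (Bernstein's trick). Write $p_i=\E(t_i)$ and $\mu=\E(X)=\sum_i p_i$. The first step is to bound the moment generating function: for any real $\lambda$, independence gives $\E(e^{\lambda X})=\prod_i\E(e^{\lambda t_i})=\prod_i\bigl(1+p_i(e^\lambda-1)\bigr)$, and since $1+x\le e^x$ this is at most $\exp\bigl(\mu(e^\lambda-1)\bigr)$.

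Next I would split into the two tails. For the upper tail, apply Markov's inequality to $e^{\lambda X}$ with $\lambda>0$: $\PP\bigl(X\ge(1+\delta)\mu\bigr)\le e^{-\lambda(1+\delta)\mu}\,\E(e^{\lambda X})\le\exp\bigl(\mu(e^\lambda-1-\lambda(1+\delta))\bigr)$, and optimize in $\lambda$ by taking $\lambda=\log(1+\delta)$, which yields $\PP\bigl(X\ge(1+\delta)\mu\bigr)\le\exp\bigl(\mu(\delta-(1+\delta)\log(1+\delta))\bigr)$. For the lower tail, the same argument with $\lambda<0$ and $\lambda=\log(1-\delta)$ gives, when $0<\delta<1$, $\PP\bigl(X\le(1-\delta)\mu\bigr)\le\exp\bigl(\mu(-\delta-(1-\delta)\log(1-\delta))\bigr)$; for $\delta\ge1$ the event $X\le(1-\delta)\mu$ forces $X=0$ (or is empty), so its probability is at most $\prod_i(1-p_i)\le e^{-\mu}$.

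The last step is elementary calculus to pass from these exact exponents to the claimed form. One checks that $\delta-(1+\delta)\log(1+\delta)\le-\delta^2/4$ for $0\le\delta\le2$ and $\le-\delta/2$ for $\delta\ge2$ — by differentiating the relevant difference once or twice and tracking the sign, with the crossover occurring exactly at $\delta=2$ where $\delta^2/4=\delta/2$ — and similarly that $-\delta-(1-\delta)\log(1-\delta)\le-\delta^2/2\le-\delta^2/4$ for $0\le\delta<1$. Plugging these in bounds each tail by $e^{-\min(\delta^2/4,\,\delta/2)\mu}$, and a union bound over the two tails produces the factor $2$ and finishes the proof.

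I expect the main (though still routine) obstacle to be the case analysis in this final step: verifying the two elementary inequalities uniformly in $\delta$, matching the $\min(\delta^2/4,\delta/2)$ shape with the correct crossover, and handling the lower tail for $\delta\ge1$ separately. Since the statement is quoted verbatim from Corollary 1.9 of \cite{Tao}, one may alternatively just invoke that reference.
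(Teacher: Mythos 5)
The paper offers no proof of this proposition at all: it is quoted, as you note, from Corollary 1.9 of Tao--Vu, and the authors simply invoke that reference. Your self-contained argument is the standard exponential-moment (Bernstein/Chernoff) proof and it is essentially correct: the bound $\E(e^{\lambda X})\le\exp\bigl(\mu(e^{\lambda}-1)\bigr)$, the optimized exponents $\delta-(1+\delta)\log(1+\delta)$ and $-\delta-(1-\delta)\log(1-\delta)$, and the elementary comparisons with $-\delta^2/4$ on $0\le\delta\le 2$, with $-\delta/2$ on $\delta\ge 2$, and with $-\delta^2/2$ on $0\le\delta<1$ all check out, with the crossover at $\delta=2$ exactly as you say. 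One small point to tighten in the final step: for the lower tail with $\delta>2$ the fallback bound $e^{-\mu}$ is \emph{not} at most $e^{-(\delta/2)\mu}$ (since $\delta/2>1$), so in that range you must use the observation you make only parenthetically, namely that the event $X\le(1-\delta)\mu$ is empty whenever $\delta>1$ and $\mu>0$ and hence has probability $0$; the bound $\PP(X=0)=\prod_i(1-p_i)\le e^{-\mu}$ is what you need at $\delta=1$ (and the degenerate case $\mu=0$ is trivial, the right-hand side being $2$). With that phrasing fixed the proof is complete; compared with the paper's bare citation, your version buys self-containedness at the cost of exactly the routine case analysis you anticipated.
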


\smallskip

Then, we have the next two lemmas which also appear in \cite{CV}:

\begin{Lemma} \label{number}
We consider the probability space of all the subsets $A \ \ce \ \{0, 1, \ldots, n\}$ defined by $\PP(i \in A) = p_i$.
With the $p_i$'s defined above, given $0 < \varepsilon < 1$, there exists $n_0(\varepsilon)$ such that, for all $n \ge n_0$,
$$\PP(|A| \ge \sigma n^{2/3} (1 - \varepsilon)) > 0.9.$$
\end{Lemma}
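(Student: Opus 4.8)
The plan is to apply Chernoff's inequality (Proposition \ref{Cher}) directly to the random variable $|A| = \sum_{i=0}^{n} t_i$, where $t_i$ is the Boolean indicator of the event $i \in A$, so that $t_i = 1$ with probability $p_i$ and the $t_i$ are independent. The $p_i$ are legitimate probabilities because, as noted just above the lemma, $p_i \le 1$ for every $i$ once $n > n(\varepsilon)$; this uses condition (1) of Theorem \ref{SchSch} together with the choice $\alpha = 1/3$ and the normalization defining $p_i$. By linearity, $\E(|A|) = \sum_{i=0}^{n} p_i = \sigma n^{2/3}$ exactly, again by the construction of the $p_i$ (this is precisely why that normalization was chosen in the Remark preceding the lemma).

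Next I would feed $\E(|A|) = \sigma n^{2/3}$ into Chernoff with deviation parameter $\delta = \varepsilon$. This gives
$$\PP\big(\,\big||A| - \sigma n^{2/3}\big| \ge \varepsilon \sigma n^{2/3}\,\big) \le 2 e^{-\min(\varepsilon^2/4,\ \varepsilon/2)\, \sigma n^{2/3}}.$$
Since $0 < \varepsilon < 1$ and $\sigma > 0$ are fixed constants, the exponent tends to $-\infty$ as $n \to \infty$, so the right-hand side tends to $0$. Hence there is $n_0(\varepsilon)$ such that for all $n \ge n_0$ this probability is less than $0.1$. On the complementary event, which then has probability greater than $0.9$, we have in particular $|A| > \sigma n^{2/3}(1-\varepsilon)$, which is the claimed bound (one can take non-strict or strict inequality freely by adjusting $n_0$). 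Finally I would take $n_0(\varepsilon)$ to be the maximum of this threshold and the $n(\varepsilon)$ from Theorem \ref{SchSch}, so that the $p_i$ are genuinely probabilities throughout.

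There is no real obstacle here: the lemma is essentially a bookkeeping application of Chernoff, and the only points requiring a word of care are (i) checking $p_i \le 1$ so that the probability space is well-defined, which is immediate from condition (1) of Theorem \ref{SchSch} with $\alpha = 1/3$, and (ii) observing that $\E(|A|)$ is \emph{exactly} $\sigma n^{2/3}$ rather than merely approximately so, which is guaranteed by the deliberate normalization of the $p_i$. Everything else is a routine limit computation on the Chernoff bound.
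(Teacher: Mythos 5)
Your proposal is correct and follows essentially the same route as the paper: both apply Chernoff's inequality directly to $|A|$ as a sum of independent Boolean indicators with $\E(|A|)=\sum_i p_i=\sigma n^{2/3}$ exactly, take $\delta=\varepsilon$, and note that the resulting bound $2e^{-\varepsilon^2\sigma n^{2/3}/4}$ drops below $0.1$ for $n$ large. Your extra remarks on $p_i\le 1$ and on choosing $n_0$ at least as large as the threshold from Theorem \ref{SchSch} are harmless bookkeeping the paper leaves implicit.
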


\begin{proof}
Since $|A|$ is a sum of independent Boolean variables and $\E(|A|) = \sum_{i=0}^{n} p_i = \sigma n^{2/3}$, we can apply Proposition \ref{Cher} to deduce that for large enough n
$$\PP(|A| < \sigma n^{2/3} (1 - \varepsilon) ) \le 2 e^{- \sigma n^{2/3} \varepsilon^2 / 4} < 0.1.$$
\end{proof}

\begin{Lemma} \label{repre}
We consider the probability space of all the subsets $A \ \ce \ \{0, 1, \ldots, n\}$ defined by $\PP(i \in A) = p_i$.
Again for the $p_i$'s defined above, given $0 < \varepsilon < 1$, there exists $n_1(\varepsilon)$ such that, for all $n \ge n_1$,
$$r (m) \le n^{1/3} \left( \frac{1 + \varepsilon}{1 - \varepsilon} \right)^3 \text{ for all } m = 0, 1, \ldots, 2n$$
with probability $> 0.9$.
\end{Lemma}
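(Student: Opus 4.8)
The plan is to fix $m$, write $r(m)$ as (twice) a sum of \emph{independent} Boolean variables plus a harmless error, apply Chernoff's inequality (Proposition \ref{Cher}) to that sum, and finish with a union bound over the $2n+1$ possible values of $m$.

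\textbf{Decomposition.} For fixed $m$ with $0\le m\le 2n$, write $r(m)=\sum_{i+j=m,\ 0\le i,j\le n}\I(i\in A)\I(j\in A)$. Pairing $(i,j)$ with $(j,i)$ gives $r(m)=2Z_m+\epsilon_m$, where $Z_m=\sum_{0\le i<m-i\le n}\I(i\in A)\I(m-i\in A)$ and $\epsilon_m=\I(m/2\in A)$ if $m$ is even with $0\le m/2\le n$, and $\epsilon_m=0$ otherwise; in particular $\epsilon_m\le 1$, so $r(m)\le 2Z_m+1$. The key observation is that the two-element sets $\{i,m-i\}$ appearing in $Z_m$ are pairwise disjoint subsets of $\{0,\dots,n\}$ (if $\{i,m-i\}$ and $\{i',m-i'\}$ met, the two pairs would coincide), so the summands of $Z_m$ depend on disjoint families of the independent indicators $\I(j\in A)$. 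Hence $Z_m$ is a sum of independent Boolean variables and Proposition \ref{Cher} applies.

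\textbf{Mean bound, Chernoff and union bound.} Since $\E r(m)=\sum_{0\le i,m-i\le n}p_ip_{m-i}$ and $2\E Z_m\le \E r(m)$, the third property of the $p_i$'s gives the uniform bound $\E Z_m\le \tfrac12 n^{1/3}\tfrac{1+\varepsilon}{(1-\varepsilon)^2}=:\mu$ for every $m$. We may assume $\E Z_m=\mu$: adding to $Z_m$ an independent auxiliary sum of Booleans of total mean $\mu-\E Z_m$ only increases $\PP(Z_m\ge t)$ and keeps $Z_m$ a sum of independent Booleans. Apply Proposition \ref{Cher} with $\delta=2\varepsilon$ (so $\min(\delta^2/4,\delta/2)=\varepsilon^2$ as $\varepsilon<1$): $\PP\big(Z_m\ge(1+2\varepsilon)\mu\big)\le 2e^{-\varepsilon^2\mu}\le 2e^{-\varepsilon^2 n^{1/3}/2}$, using $\mu\ge \tfrac12 n^{1/3}$. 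On the complementary event, $r(m)\le 2Z_m+1<2(1+2\varepsilon)\mu+1=(1+2\varepsilon)\tfrac{1+\varepsilon}{(1-\varepsilon)^2}n^{1/3}+1$, and since $(1+2\varepsilon)(1-\varepsilon)<(1+\varepsilon)^2$ one has $\tfrac{(1+\varepsilon)^3}{(1-\varepsilon)^3}-(1+2\varepsilon)\tfrac{1+\varepsilon}{(1-\varepsilon)^2}=\tfrac{(1+\varepsilon)(\varepsilon+3\varepsilon^2)}{(1-\varepsilon)^3}$, a quantity of order $\varepsilon$, so the gap of order $\varepsilon n^{1/3}$ absorbs the extra $+1$ once $n$ is large; thus $r(m)\le n^{1/3}\big(\tfrac{1+\varepsilon}{1-\varepsilon}\big)^3$ on that event. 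A union bound over $m=0,1,\dots,2n$ gives $\PP\big(\exists m:\ r(m)>n^{1/3}(\tfrac{1+\varepsilon}{1-\varepsilon})^3\big)\le 2(2n+1)e^{-\varepsilon^2 n^{1/3}/2}\to 0$, so it is below $0.1$ for all $n\ge n_1(\varepsilon)$, which is exactly the assertion.

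\textbf{Main obstacle.} The delicate point is that $\E Z_m$ can be very small, or even $0$, for certain $m$ — when $m$ is near $0$ or $2n$, or when the $a_i$ (hence the $p_i$) vanish on a long block of indices — and then the relative form of Chernoff's inequality with the \emph{exact} mean is too weak to beat the $2n+1$ loss in the union bound. This is precisely why I inflate the mean to the common value $\mu$ before invoking Proposition \ref{Cher}: after inflation the exponent $\varepsilon^2\mu\asymp n^{1/3}$ is uniform in $m$. (Alternatively, the small-mean values of $m$ can be handled by the trivial estimate $r(m)\le \min(m+1,\,2n+1-m)$, which is $O(n^{1/3})$ in exactly that range.) The remaining manipulations are routine.
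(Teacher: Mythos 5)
Your proof is correct and follows essentially the same route as the paper: the same decomposition $r(m)=2Z_m+\I(m/2\in A)$ into a sum of independent Boolean variables (the paper's $r'(m)/2$), the same uniform bound on the mean coming from the third property of the $p_i$'s, Chernoff's inequality, a union bound over the $2n+1$ values of $m$, and extra $\varepsilon$-room to absorb the diagonal term. The only difference is technical: where you handle small or vanishing $\E Z_m$ by inflating the mean with auxiliary independent Booleans (a valid monotonicity argument), the paper instead splits into two cases and applies Proposition \ref{Cher} with $\delta=\varepsilon$ when $\mu_m$ is of order $n^{1/3}$ and with a large $\delta$ (using the $\delta/2$ branch of the exponent) when $\mu_m$ is small; both devices yield a uniform exponent of order $n^{1/3}$.
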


\begin{proof}
Since $r(m) = \sum_{0 \le i, m-i \le n} \I(i \in A) \I(m - i \in A)$ is a sum of Boolean variables which are not independent, it is convenient to consider
$$r'(m)/2 = \sum_{\substack{0 \le i, m-i \le n \\ i < m/2}} \I(i \in A) \I(m - i \in A)$$
leaving in mind that $r(m) = r'(m) + \I(m/2 \in A)$.

\smallskip

From the independence of the indicator functions, and following the notation introduced in Definition \ref{defrep}, the expected value of $r'(m)/2$ is
\begin{eqnarray*}
\mu_m & = & \E(r'(m)/2) = \sum_{\substack{0 \le i, m-i \le n \\ i < m/2}} \E(\I(i \in A)\I(m - i \in A)) \\
& = & \sum_{\substack{0 \le i, m-i \le n \\ i < m/2}} p_i p_{m-i} \le \frac{n^{1/3}}{2} \cdot \frac{1 + \varepsilon}{(1 - \varepsilon)^2}, 
\end{eqnarray*}
for every $m = 0, 1, \ldots, 2n$, for $n$ large enough.

\smallskip

If $\mu_m = 0$ then $\PP(r'(m)/2 > 0) = 0$, so we can consider the next two cases:
\begin{itemize}
\item If $\frac{1}{3} \cdot \frac{n^{1/3}(1 + \varepsilon)}{2 (1 - \varepsilon)^2} \le \mu_m$, we can apply Proposition \ref{Cher} (observe that $\varepsilon < 2$ and then $\varepsilon^2/4 \le \varepsilon/2$) to have
\begin{eqnarray*}
\PP \left( r'(m)/2 \ge \frac{n^{1/3}}{2} \left( \frac{1 + \varepsilon}{1 - \varepsilon} \right)^2 \right) & \le & \PP ( r'(m)/2 \ge \mu_m (1 + \varepsilon)) \\
& \le & 2 \exp \left(- \frac{\varepsilon ^2 \mu_m}{4} \right) \\
& \le & 2 \exp \left(- \frac{n^{1/3} \varepsilon ^2 (1 + \varepsilon)}{24 (1 - \varepsilon)^2} \right)
\end{eqnarray*}

\item If $0 < \mu_m < \frac{1}{3} \cdot \frac{n^{1/3}(1 + \varepsilon)}{2 (1 - \varepsilon)^2}$ then we define $\delta = \frac{n^{1/3}}{2 \mu_m} \left( \frac{1 + \varepsilon}{1 - \varepsilon} \right)^2 - 1$ (observe that $\delta \ge 2$ and then $\delta/2 \le \delta^2/4$) and we can apply Proposition \ref{Cher} to have
\begin{eqnarray*}
\PP \left( r'(m)/2 \ge \frac{n^{1/3}}{2} \left( \frac{1 + \varepsilon}{1 - \varepsilon} \right)^2 \right) & = &  \PP(r'(m)/2 \ge \mu_m (1 + \delta)) \\
& \le & 2 \exp \left(- \frac{\delta \mu_m}{2} \right) \\
& = & 2 \exp \left( - \frac{n^{1/3}}{4} \left( \frac{1 + \varepsilon}{1 - \varepsilon} \right)^2 + \frac{\mu_m}{2} \right) \\
& \le & 2 \exp \left( - \frac{n^{1/3}}{4} \left( \frac{1 + \varepsilon}{1 - \varepsilon} \right)^2 + \frac{n^{1/3}(1 + \varepsilon)}{12(1 - \varepsilon)^2} \right) \\
& \le & 2 \exp \left( - \frac{n^{1/3}}{6} \left( \frac{1 + \varepsilon}{1 - \varepsilon} \right)^2 \right) \\
\end{eqnarray*}
\end{itemize}

Then,
\begin{eqnarray*}
&& \PP \left( r'(m)/2 \ge \frac{n^{1/3}}{2} \left( \frac{1 + \varepsilon}{1 - \varepsilon} \right)^2 \text{ for some } m \right)\\
&& \le 4n \left( \exp \left(- \frac{n^{1/3} \varepsilon ^2 (1 + \varepsilon)}{24 (1 - \varepsilon)^2} \right) + \exp \left( - \frac{n^{1/3}}{6} \left( \frac{1 + \varepsilon}{1 - \varepsilon} \right)^2 \right) \right)
\end{eqnarray*}
which is $< 0.1$ for $n$ large enough.

\smallskip

Remembering that $r(m) = r'(m) + \I(m/2 \in A)$,
$$\PP \left( r(m) \ge n^{1/3} \left( \frac{1 + \varepsilon}{1 - \varepsilon} \right)^2 + \ \I(m/2 \in A) \ \text{ for some } m \right) < 0.1 \text{ for } n \text{ large enough},$$
and finally
$$\PP \left( r(m) \ge n^{1/3} \left( \frac{1 + \varepsilon}{1 - \varepsilon} \right)^3 \text{ for some } m \right) < 0.1 \text{ for } n \text{ large enough}.$$

\end{proof}

Lemmas \ref{number} and \ref{repre} imply that, given $0 < \varepsilon < 1$, for $n \ge \max \{n_0, n_1\}$, the probability that our random set $A$ satisfies $|A| \ge \sigma n^{2/3} (1 - \varepsilon)$ and $r(m) \le n^{1/3} \left( \frac{1 + \varepsilon}{1 - \varepsilon} \right)^3$ for every $m$ is greater than $0.8$. In particular, for every $n \ge \max\{n_0, n_1\}$ we have a set $A \ce \{0, 1, \ldots, n\}$ satisfying these conditions.

\section{From residues to integers}

In order to prove Part B of Theorem \ref{integer}, we will also need the next lemma, which allows us to ``paste'' copies of a $g_2$-Sidon set in a cyclic group with a dilation of a $g_1$-Sidon set in the integers.

\begin{Lemma} \label{pasting}
Let $A = \{0 = a_1 < \ldots < a_k \}$ be a $g_1$-Sidon set in $\mathbb{Z}$ and let $C \subseteq [1, q]$ be a $g_2$-Sidon set $\pmod{q}$. Then $B = \cup_{i=1}^{k}(C + q a_i)$ is a $g_1 g_2$-Sidon set in $[1, q(a_k + 1)]$ with $k|C|$ elements.
\end{Lemma}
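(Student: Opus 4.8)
The plan is to count, for an arbitrary $x \in [1, q(a_k+1)]$, the pairs $(b, b') \in B \times B$ with $b + b' = x$, and to show there are at most $g_1 g_2$ of them. Every element of $B$ has the form $c + q a_i$ with $c \in C$, $0 \le c \le q$ (after adjusting: the paper has $C \subseteq [1,q]$, so elements lie in $q a_i + [1, q]$), and I would first record that $B$ is a disjoint union of the $k$ blocks $C + q a_i$, whose elements have size roughly $q a_i$, so that $|B| = k|C|$. Given $b = c + q a_i$ and $b' = c' + q a_j$, the equation $b + b' = x$ reads $c + c' + q(a_i + a_j) = x$. The key observation is that $c + c'$ lies in a short range (between $2$ and $2q$) while $q(a_i+a_j)$ is a multiple of $q$ of controlled size; so reducing $b+b' = x$ modulo $q$ pins down $c + c' \pmod q$, and then the integer value of $a_i + a_j$ is determined up to one ambiguity coming from whether $c+c' \le q$ or $c+c' > q$ — exactly the usual carry phenomenon.

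The main steps, in order: (1) Fix $x$ and let $n = \lfloor x/q \rfloor$ roughly; show that $a_i + a_j \in \{n-1, n\}$ (or a two-element set) for any representation, because $0 < c + c' \le 2q$. (2) For each of the (at most two) admissible values $\nu$ of $a_i + a_j$, the number of pairs $(i,j)$ — equivalently pairs $(a_i, a_j) \in A \times A$ with $a_i + a_j = \nu$ — is at most $g_1$, since $A$ is a $g_1$-Sidon set in $\mathbb{Z}$ (note $\nu$ is a genuine integer, not a residue, so no wraparound occurs). (3) For each fixed such pair $(a_i, a_j)$, the residue $c + c' \equiv x \pmod q$ is determined, so the number of pairs $(c, c') \in C \times C$ with $c + c' \equiv x \pmod q$ is at most $g_2$ because $C$ is a $g_2$-Sidon set $\pmod q$. (4) Multiply: at most $g_1$ choices of $(a_i, a_j)$ times at most $g_2$ choices of $(c,c')$ — but one must be careful that the ``two admissible values of $\nu$'' do not let the count become $2 g_1 g_2$. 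The resolution is that the value of $\nu$ is itself forced by the choice of $(c, c')$: once $c + c'$ is known as an integer, $q(a_i + a_j) = x - (c+c')$ is determined, hence $\nu$ is determined. So one should organize the count as: choose $(c, c') \in C^2$ with $c + c' \equiv x \pmod q$ (at most $g_2$ ways); this fixes $\nu = (x - (c+c'))/q$; then choose $(a_i, a_j) \in A^2$ with $a_i + a_j = \nu$ (at most $g_1$ ways). This gives the clean bound $g_1 g_2$.

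The step I expect to be the main obstacle is step (4): making sure the carry does not cost a factor of $2$. The sloppy version of the argument fixes $(a_i, a_j)$ first and then says ``$c+c' \equiv x \bmod q$, at most $g_2$ pairs,'' which is correct, but then summing over the at-most-two values of $\nu$ and the at-most-$g_1$ pairs per value gives $2 g_1 g_2$. Reversing the order of the two choices — pair in $C$ first, then pair in $A$ — is what removes the factor of $2$, because the integer $c + c'$ (not merely its residue) determines $\nu$. One small technicality to handle: the convention $a_1 = 0$, and that $C \subseteq [1,q]$ rather than $[0, q-1]$, so the blocks $C + q a_i$ sit in $(q a_i, q a_i + q]$ and are genuinely disjoint, and $B \subseteq (0, q a_k + q] = [1, q(a_k+1)]$ as claimed; also $|C| \le q$ and the blocks have equal size $|C|$, giving $|B| = k|C|$.
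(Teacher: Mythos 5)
Your proposal is correct and follows essentially the same route as the paper: decompose each $b\in B$ uniquely as $c+qa_i$, reduce the equation $b+b'=x$ modulo $q$ to bound the number of admissible pairs $(c,c')$ by $g_2$, and then use that the integer value of $c+c'$ forces the value of $a_i+a_j$, which $A$ being $g_1$-Sidon bounds by $g_1$. The paper phrases this as a pigeonhole argument on $g_1g_2+1$ representations rather than your direct product count, but the decomposition, the order of the two steps, and the resolution of the carry issue are the same.
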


\begin{proof}
Suppose we have $g_1g_2 + 1$ representations of an element as the sum of two
$$b_{1,1} + b_{2,1} = b_{1,2} + b_{2,2} = \cdots = b_{1,g_1 g_2 + 1} + b_{2,g_1 g_2 + 1}.$$

Each $b_{i,j} = c_{i,j} + q a_{i,j}$ in a unique way. Now we can look at the equality modulo $q$ to have
$$c_{1,1} + c_{2,1} = c_{1,2} + c_{2,2} = \cdots = c_{1,g_1 g_2 + 1} + c_{2,g_1 g_2 + 1} \pmod{q}.$$

Since $C$ is a $g_2$-Sidon set $\pmod{q}$, by the pigeonhole principle, there are at least $g_1 + 1$ pairs $(c_{1,i_1},c_{2,i_1})$, ..., $(c_{1,i_{g_1 + 1}},c_{2,i_{g_1 + 1}})$ such that:
$$c_{1,i_1} = \cdots = c_{1,i_{g_1 + 1}} \qquad \text{and} \qquad c_{2,i_1} = \cdots = c_{2,i_{g_1 + 1}}.$$

So the corresponding $a_i$'s satisfy
$$a_{1,i_1} + a_{2,i_1} = \cdots = a_{1,i_{g_1 + 1}} + a_{2,i_{g_1 + 1}},$$
and since $A$ is a $g_1$-Sidon set, there must be an equality
$$a_{1, k} = a_{1, l} \qquad \text{and} \qquad a_{2, k} = a_{2, l}$$
for some $k, l \in \{ i_1, \ldots, i_{g_1 + 1} \}$.

Then, for these $k$ and $l$ we have
$$b_{1, k} = b_{1, l} \qquad \text{and} \qquad b_{2, k} = b_{2, l},$$
which completes the proof.
\end{proof}

\bigskip

With all these weapons, we are ready to finish our proof.

\bigskip

Given $0 < \varepsilon < 1$ we have that:
\begin{enumerate}
\item [a)] For every large enough $g$ we can define $n = n(g)$ as the least integer such that $g = \lfloor n^{1/3} \left( \frac{1 + \varepsilon}{1 - \varepsilon} \right)^3 \rfloor$, and such an $n$ exists because $n^{1/3} \left( \frac{1 + \varepsilon}{1 - \varepsilon} \right)^3$ grows more slowly than $n$. Observe that $n(g) \to \infty$ when $g \to \infty$.

Now, by lemmas \ref{number} and \ref{repre}, there is $g_0 = g_0(\varepsilon)$ such that for every $g_1 \ge g_0$ we can consider $n = n(g_1)$ and we have a $g_1$-Sidon set $A \ce \{0, 1, \ldots, n\}$ such that
$$\frac{|A|}{\sqrt{g_1}\sqrt{n+1}} \ge \sigma \sqrt{\frac{n}{n + 1}} \cdot \frac{(1 - \varepsilon)^{5/2}}{(1 + \varepsilon)^{3/2}}.$$

\item [b)] By Theorem \ref{ciclicp2s}, there are $g_2 = g_2(\varepsilon)$, $s = s(\varepsilon)$ and a sequence $q_0 = p_r^2 s$, $q_1 = p_{r+1}^2 s$, $q_2 = p_{r+2}^2 s$, $\ldots$ (where $p_i$ is the $i$-th prime and $r = r(\varepsilon)$) such that for every $i = 0, 1, 2, \ldots$ there is a $g_2$-Sidon set $A_i \ce \mathbb{Z}_{q_i}$ with
$$\frac{|A_i|}{\sqrt{g_2 q_i}} \ge 1 - \varepsilon.$$
\end{enumerate}

\smallskip

So, given $0 < \varepsilon < 1$:
\begin{enumerate}
\item [1) ] For every $g \ge g_0(\varepsilon) g_2(\varepsilon)$ there is a $g_1 = g_1(g)$ such that
$$g_1 g_2 \le g < (g_1 + 1) g_2,$$
and we have $n = n(g_1)$ with $g_1 = \lfloor n^{1/3} \left( \frac{1 - \varepsilon}{1 + \varepsilon}\right)^3\rfloor$ and a $g_1$-Sidon set $A \ce \{0, 1, \ldots, n \}$ with
$$\frac{|A|}{\sqrt{g_1}\sqrt{n + 1}} \ge \sigma \sqrt{\frac{n}{n + 1}} \cdot \frac{(1 - \varepsilon)^{5/2}}{(1 + \varepsilon)^{3/2}}.$$

\smallskip

\item [2) ] For any $N \ge (n + 1) q_0$, there is an $i = i(N)$ such that
$$(n + 1) q_i \le N < (n + 1) q_{i + 1},$$
and we have a $g_2$-Sidon set $\pmod{q_i}$, $A_i$, with
$$\frac{|A_i|}{\sqrt{g_2 q_i}} \ge 1 - \varepsilon.$$

\end{enumerate}

Then, for any $g$ and $N$ large enough, applying Lemma \ref{pasting} we can construct a $g_1 g_2$-Sidon set from $A$ and $A_i$ with $|A||A_i|$ elements in $[1, N]$.

So we have that $\beta_g(N) \ge \beta_{g_1 g_2}(N) \ge |A||A_i|$ and then
\begin{eqnarray*}
\frac{\beta_g(N)}{\sqrt{g} \sqrt{N}} & \ge & \frac{\beta_{g_1 g_2}(N)}{\sqrt{(g_1 + 1)g_2} \sqrt{(n + 1)q_{i+1}}} \\
& \ge & \frac{|A| |A_i|}{\sqrt{g_1 g_2} \sqrt{(n + 1)q_i}} \sqrt{\frac{g_1}{g_1 + 1}} \sqrt{\frac{q_i}{q_{i+1}}} \\
& \ge & \sigma \frac{(1 - \varepsilon)^{7/2}}{(1 + \varepsilon)^{3/2}} \sqrt{\frac{n}{n + 1}} \sqrt{\frac{g_1}{g_1 + 1}} \sqrt{\frac{p_{r+i}}{p_{r+i+1}}}.
\end{eqnarray*}

Finally, as a consequence of the Prime Number Theorem, this means that, given $0 < \varepsilon < 1$, for $g$ and $N$ large enough
$$\frac{\beta_g(N)}{\sqrt{g} \sqrt{N}} \ge \sigma \frac{(1 - \varepsilon)^{9/2}}{(1 + \varepsilon)^{3/2}}$$
i. e.
$$\liminf_{g \rightarrow \infty} \liminf_{N \rightarrow \infty} \frac{\beta_g(N)}{\sqrt{g} \sqrt{N}} \ge \sigma.$$


\begin{thebibliography}{99}

\bibitem{BC}
  \textsc{R. C. Bose},
  \emph{An afine analogue of Singer's theorem}.
  J. Indian Math. Soc. vol 6, 1-15 (1942)

\bibitem{Ci1}
  \textsc{J. Cilleruelo},
  \emph{An upper bound for $B_2[2]$ sequences}.
  J. Combin. Theory, Ser. A, vol 89, n$^\circ$1, 141-144 (2000).

\bibitem{CRT}
  \textsc{J. Cilleruelo, I. Z. Ruzsa and C. Trujillo},
  \emph{Upper and lower bounds for finite $B_h[g]$ sequences}.
  J. Number Theory 97, 26-34 (2002).

\bibitem{CV}
  \textsc{J. Cilleruelo, C. Vinuesa},
  \emph{$B_2[g]$ sets and a conjecture of Schinzel and Schmidt}.
  Combinatorics, Probability and Computing, vol 17, n$^\circ$6 (2008).

\bibitem{ET}
  \textsc{P. Erd\H os and P. Tur\'an},
  \emph{On a problem of Sidon in additive number theory, and on some related problems}.
  J. London Math. Soc. 16, 212-215 (1941).

\bibitem{G}
  \textsc{B. Green},
  \emph{The number of squares and $B_h[g]$ sets}.
  Acta Arith. 100, 365-390 (2001).

\bibitem{HP}
  \textsc{L. Habsieger and A. Plagne},
  \emph{Ensembles $B_2[2]$: l'\'etau se resserre}.
  Integers 2, Paper A2, 20 pp., electronic (2002).

\bibitem{K}
  \textsc{M. Kolountzakis},
  \emph{The Density of  Sets and the Minimum of Dense Cosine Sums}.
  J. Number Theory 56, 1, 4-11 (1996).

\bibitem{L}
  \textsc{B. Lindstr\"om}, \emph{An inequality for $B_2$-sequences}.
  J. Combinatorial Theory, 6, 211-212 (1969).

\bibitem{L2}
  \textsc{B. Lindstr\"om},
  \emph{$B_h[g]$-sequences from $B_h$-sequences}.
  Proc. Amer. Math. Soc. 128, 657-659 (2000).

\bibitem{Mar}
  \textsc{G. Martin, K. O'Bryant},
  \emph{Constructions of Generalized Sidon Sets}.
  Journal of Combinatorial Theory, Series A, Volume 113, Issue 4, 591-607 (2006).

\bibitem{Mar1}
  \textsc{G. Martin, K. O'Bryant},
  \emph{The Symmetric Subset Problem in Continuous Ramsey Theory}.
  Experiment. Math., Volume 16, no 2, 145-166 (2007).

\bibitem{Mar2}
  \textsc{G. Martin, K. O'Bryant},
  \emph{The supremum of autoconvolutions, with applications to additive number theory}.
  arXiv:0807.5121v2.

\bibitem{MV}
  \textsc{M. Matolcsi, C. Vinuesa},
  \emph{Improved bounds on the supremum of autoconvolutions}.
  arXiv:0907.1379v2.

\bibitem{P}
  \textsc{A. Plagne},
  \emph{A new upper bound for $B_2[2]$ sets}.
  J. Combin. Theory, Ser. A, 93, 378-384 (2001).

\bibitem{P2}
  \textsc{A. Plagne},
  \emph{Recent progress on finite $B_h[g]$ sets},
  Proceedings of the Thirty-Second Southeastern International Conference on Combinatorics, Graph Theory and Computing (Baton Rouge, LA, 2001), vol. 153, pp. 49-64 (2001).

\bibitem{Ru}
  \textsc{I. Z. Ruzsa},
  \emph{Solving a linear equation in a set of integers}.
  Acta Arithmetica LXV.3 259-282 (1993).

\bibitem{Schin}
  \textsc{A. Schinzel, W. M. Schmidt},
  \emph{Comparison of $L^1-$ and $L^{\infty}-$ norms of squares of polynomials},
  Acta Arithmetica, 104, no 3 (2002).

\bibitem{S}
  \textsc{S. Sidon},
  \emph{Ein Satz \"{A}uber trigonomietrische Polynome und seine Anwendungen in der Theorie der Fourier-Reihen}.
  Math. Annalen 106, 536-539 (1932).

\bibitem{Si}
  \textsc{J. Singer},
  \emph{A theorem in finite projective geometry and some applications to number theory}.
  Trans. Amer. Math. Soc. 43 , 377-385 (1938).

\bibitem{Tao}
  \textsc{T. Tao, V. Vu},
  \emph{Additive Combinatorics}.
  Cambridge Studies in Advanced Mathematics, 105 (2006).

\bibitem{Yu}
  \textsc{G. Yu},
  \emph{An upper bound for $B_2[g]$ sets}.
  J. Number Theory 122, no. 1, 211-220 (2007).

\end{thebibliography}
\end{document}